\theoremstyle{plain}
\theoremstyle{definition}
\theoremstyle{corollary}
\newtheorem{theorem}{Theorem}[section]
\newtheorem*{theorem*}{Theorem}
\newtheorem{lemma}[theorem]{Lemma}
\newtheorem{proposition}[theorem]{Proposition}
\newtheorem{claim}[theorem]{Claim}
\newtheorem{corollary}[theorem]{Corollary}
\newtheorem{fact}[theorem]{Fact}
\newtheorem{maintheorem}{Theorem}
\newtheorem{definition}[theorem]{Definition}
\newtheorem*{definition*}{Definition}
\newtheorem*{lemma*}{Lemma}
\numberwithin{equation}{section}
\numberwithin{theorem}{section}
\newcommand{\R}{\mathbb{R}}
\newcommand{\N}{\mathbb{N}}
\newcommand{\sub}[1]{\mathrm{Sub}_{#1}}
\newcommand{\cT}{{\mathcal{T}}}
\newcommand{\half}{{\textstyle \frac12}}
\newcommand{\sg}[1]{{\left\langle {#1} \right\rangle}}
\newcommand{\nc}[1]{\overline{\sg{#1}}}
\newcommand{\irs}[1]{\mathrm{IRS}(#1)}
\newcommand{\irsg}{\irs{G}}
\newcommand{\modl}[2]{{#1} \backslash {#2}}
\newcommand{\cosg}{\mathrm{Cos}_{G}}
\newcommand{\cosgn}{\mathrm{Cos}_{G,N}}
\newcommand{\cost}{\mathrm{Cos}_{N}}
\newcommand{\closedsubs}[1]{\mathcal{C}({#1})}
\newcommand{\pb}[1]{\Pi\left({#1}\right)}
\newcommand{\ergcom}[2]{{#2} \backslash\kern-.3em\backslash {#1}}
\newcommand{\bow}[2]{\mathrm{B}_{#1}({#2})}
\newcommand{\subg}{\sub{G}}
\newcommand{\subt}{\sub{N}}
\newcommand{\im}{m}
\newcommand{\fk}{H}
\newcommand{\pr}{\mathrm{pr}}
\newcommand{\id}{\mathrm{id}}
\newcommand{\rep}{\mathrm{rep}}
\def\dd{\mathrm{d}}
\DeclareMathOperator{\supp}{supp}
\begin{document}

\title[Stabilizer Rigidity in Irreducible Group Actions]{Stabilizer
  Rigidity in Irreducible Group Actions}

\author[Yair Hartman]{Yair Hartman}
\address[Y.\ Hartman]{Weizmann Institute of Science.}

\author[Omer Tamuz]{Omer Tamuz}
\address[O.\ Tamuz]{California Institute of Technology.}

\thanks{Y.\ Hartman is supported by the European Research Council,
  grant 239885.  O.\ Tamuz is supported by ISF grant 1300/08, and is a
  recipient of the Google Europe Fellowship in Social Computing. This
  research is supported in part by this Google Fellowship.}
\date{\today}

\begin{abstract}
  We consider irreducible actions of locally compact product groups,
  and of higher rank semi-simple Lie groups. Using the intermediate
  factor theorems of Bader-Shalom and Nevo-Zimmer, we show that the
  action stabilizers, and all irreducible invariant random subgroups,
  are co-amenable in their normal closure. As a consequence, we derive
  rigidity results on irreducible actions that generalize and
  strengthen the results of Bader-Shalom and Stuck-Zimmer.
\end{abstract}

\maketitle
\tableofcontents

\section{Introduction}

Let $G$ be a locally compact second countable (lcsc) group.  An {\em
  invariant random subgroup} (IRS) of $G$ is a random variable that
takes values in $\subg$, the space of closed subgroups of $G$, and
whose distribution is invariant to conjugation by any element of
$G$~\cite{abert2014kesten}. IRSs arise naturally as stabilizers of
probability measure preserving (pmp) actions, and in fact any IRS is
the stabilizer of some pmp action (see~\cite[Theorem
2.4]{abert2012growth} and also~\cites{abert2014kesten,
  creutz2013stabilizers-of-ergodic}). They are also an interesting
object of study as stochastic generalizations of normal subgroups, and
of lattices.

Let $G = G_1\times G_2$ be a product of two lcsc groups. A pmp action
$G \curvearrowright (X,\im)$ is {\em irreducible} (with respect to the
decomposition $G=G_1 \times G_2)$ if the actions of both $G_1$ and
$G_2$ are ergodic. Likewise, a pmp action of a semi-simple Lie group
is said to be irreducible if the action of every non-central closed
normal subgroup is ergodic. An IRS $K$ in $G$ is irreducible if
$G \curvearrowright (\subg, \lambda)$ is irreducible, where $\lambda$
is the distribution of $K$ and $G$ acts on $\subg$ by
conjugation~\cite{abert2012growth}.

In this paper we study irreducible IRSs of product groups and of
semi-simple Lie groups. Our results are are generalizations of the
theorems of Bader-Shalom~\cite[Theorem 1.6]{bader2006factor} (for
product groups) and of Stuck-Zimmer~\cite{stuck1994stabilizers} (for
semi-simple Lie groups), who both require $G$ to have property (T); we
explore what can be said when this hypothesis is removed.
Nevertheless, in both cases, we rely on the corresponding Intermediate
Factor Theorems: Bader-Shalom~\cite{bader2006factor} and and
Nevo-Zimmer~\cite{nevo2002generalization}.

To state our results we will need to recall the following definition:
A subgroup $H$ is said to be {\em co-amenable} in $G$ if there exists
a $G$-invariant mean on $G/H$~\cite{eymard1972moyennes,
  shalom1999invariant, monod2003co}; equivalently, one can define
co-amenability as a fixed point property or as a representation
theoretical property, in analogy to the different equivalent
definitions of amenability (see~\cite{eymard1972moyennes}
or~\cite[Theorem 4.18]{shalom1999invariant}). A normal subgroup $N
\lhd G$ is co-amenable in $G$ if and only if $G/N$ is amenable.

We say that an IRS $K$ is co-amenable in $G$ if it is almost surely
co-amenable in $G$. Likewise, if $K$ almost surely has some property
(e.g., trivial, normal, co-finite), we say succinctly that $K$ has
this property.

\begin{maintheorem}
  \label{thm:product-irs-rigidity}
  Let $G=G_1 \times G_2$ be a locally compact second countable group,
  and let $K$ be an irreducible IRS in $G$. Then there exist closed
  normal subgroups $N_1 \lhd G_1$ and $N_2 \lhd G_2$ such that $K$ is
  co-amenable in $N_1 \times N_2$.
\end{maintheorem}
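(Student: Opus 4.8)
The plan is to reduce the statement about the IRS to a statement about one concrete action and then feed that action into the Bader--Shalom intermediate factor theorem. First I would realize $K$ as the stabilizer IRS of an ergodic pmp action $G \curvearrowright (X,\im)$; by the realization results cited in the introduction every IRS arises this way, and since $K$ is irreducible I would arrange that the action is itself irreducible, i.e.\ that each of $G_1$ and $G_2$ acts ergodically on $(X,\im)$. The goal then becomes: produce fixed closed normal subgroups $N_1 \lhd G_1$ and $N_2 \lhd G_2$ so that for $\im$-a.e.\ $x$ the point stabilizer $\stab(x)$ is co-amenable in $N_1\times N_2$.

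Next I would bring in the boundary. Let $(B_i,\nu_i)$ be the Furstenberg--Poisson boundary of $G_i$ for an admissible measure $\mu_i$, set $B = B_1\times B_2$, and form the stationary $(G,\mu)$-space $(X\times B,\im\otimes\nu)$, which fibers $G$-equivariantly over $(X,\im)$. The conceptual content I need is a dictionary: co-amenability of the random subgroup $\stab(x)$ in a closed normal subgroup $N$ should be detected by a measurable, $G$-equivariant field $x\mapsto\beta_x$ of $\stab(x)$-invariant means exhibiting $\stab(x)$ as controlling the whole boundary of $N$. I would then define $N_1,N_2$ so that the boundary of $N_1\times N_2$ is precisely the sub-boundary of $B$ that the stabilizers control; their normality is forced by the conjugation-invariance of the law $\lambda$ of $K$, and their being non-random (constant in $x$) by the ergodicity of each factor $G_i$ on $(X,\im)$.

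The decisive step is to recognize this equivariant boundary data as a genuine intermediate $G$-factor $X\times B \to Y \to X$ and to apply Bader--Shalom. That theorem, using irreducibility, classifies such $Y$: it must split as $Y \cong X\times B_1'\times B_2'$ with each $B_i'$ a $G_i$-equivariant boundary factor of $B_i$. Reading this back, the controlled sub-boundary $B_1'\times B_2'$ is the boundary of the product $N_1\times N_2$, and the existence of the equivariant field of $\stab(x)$-invariant means on it is exactly the assertion that $\stab(x)$ is co-amenable in $N_1\times N_2$; assembling these means yields, for a.e.\ $x$, an $\stab(x)$-invariant mean on $(N_1\times N_2)/\stab(x)$.

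I expect the main obstacle to be the dictionary of the second step: making precise and proving the equivalence between co-amenability of the random stabilizer and a measurable field of stabilizer-invariant means on the boundary, and checking that this field defines an honest measurable $G$-factor of $X\times B$ rather than merely a fiberwise object---only then is the intermediate factor theorem applicable. A secondary technical point is the passage from the a.s.\ statement for $\stab(x)$ back to the IRS $K$, together with the verification that $N_1,N_2$ can be chosen independently of $x$; both should follow from irreducibility combined with the $G$-invariance of $\lambda$.
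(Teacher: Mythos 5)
Your high-level skeleton (Poisson boundary plus the Bader--Shalom IFT applied to an intermediate factor over an irreducible base) is the same as the paper's, but two of your steps are genuine gaps, and the second one is precisely where the paper's actual work lies. First, the reduction to a pmp action is both unjustified and unnecessary. You assert that an irreducible IRS can be realized as the stabilizer IRS of an \emph{irreducible} action; the realization results cited in the introduction only produce \emph{some} pmp action with prescribed stabilizers, and the paper explicitly notes that the implication between irreducibility of an action and of its stabilizer IRS goes only one way. So ``arranging'' irreducibility of the realization needs an argument you do not supply. The paper sidesteps this entirely: by definition, $K$ irreducible \emph{means} that the conjugation action $G \curvearrowright (\subg,\lambda)$ is irreducible, so that action itself serves as the base for the IFT, and no realization is needed.

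Second, the ``dictionary'' you defer as the main obstacle is the theorem's core content, and it is not the formal equivalence you describe. The paper's intermediate factor is not built from fields of means; it is the Bowen space $\bow{\mu}{\lambda}$, the fiber bundle over $\supp \lambda$ whose fiber over $\fk$ is $\ergcom{\pb{G,\mu}}{\fk}$, which is automatically a measurable $G$-factor sitting between $\pb{G,\mu}\times(\subg,\lambda)$ and $(\subg,\lambda)$. When the IFT forces it to be standard, what you learn (Claim~\ref{clm:t-invariant}) is that $\ergcom{\pb{G,\mu}}{\fk} = \ergcom{\pb{G,\mu}}{\T}$ for $\lambda$-a.e.\ $\fk$, where $\T = \nc{\lambda}$ is the normal closure of the IRS; i.e., the random stabilizer and a fixed normal subgroup have the same boundary quotient. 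Passing from this to co-amenability is \emph{not} automatic: for normal subgroups it is Kaimanovich's theorem (Theorem~\ref{clm:normal-co-amenable}), but $K$ is not normal, and the paper must prove it (Proposition~\ref{thm:co-amenable-ift}) by forming Ces\`aro averages $\theta_n = \frac{1}{n}\sum_{k=0}^{n-1}\lambda * \mu^k$ on $\lambdag$, invoking Kaimanovich's total-variation convergence to the Bowen-space measure, pushing to $\lambdat$, disintegrating over $\subg$, and applying Greenleaf's criterion fiberwise to get a $\T$-invariant mean on $\tmodk$. Your proposal instead asserts that the splitting of the intermediate factor ``is exactly'' co-amenability of the stabilizer, which is false as a formal matter. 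Finally, the subgroups $N_i$ are not defined by any boundary-control condition: the paper simply takes $N_i$ to be the projection of $\T$ to $G_i$, notes $\ergcom{\pb{G,\mu}}{\T} = \ergcom{\pb{G,\mu}}{(N_1\times N_2)}$ from the product structure $\pb{G,\mu}=\pb{G_1,\mu_1}\times\pb{G_2,\mu_2}$, and concludes via Theorem~\ref{clm:normal-co-amenable} that $\T$, and hence $K$, is co-amenable in $N_1\times N_2$.
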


\begin{maintheorem}
  \label{thm:lie-irs-rigidity}
  Let $G$ be a connected semi-simple Lie group with finite center, no
  compact factors and $\R$-rank $\ge 2$. Let $K$ be an irreducible IRS
  in $G$. Then $K$ is either equal to a closed normal subgroup, 
  or else $K$ is co-amenable in $G$.
\end{maintheorem}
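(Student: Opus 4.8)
The plan is to realize $K$ as an action stabilizer, couple the associated action with the Furstenberg boundary, apply the Nevo–Zimmer intermediate factor theorem to the stabilizers, and read off a normal subgroup in which $K$ is co-amenable; the semisimple structure of $G$ then sharpens this to the stated dichotomy.

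\textbf{Realization and setup.} First I would realize $K$ as a stabilizer: by the realization results quoted in the introduction there is an ergodic pmp action $G \curvearrowright (X,\mu)$ whose stabilizer map $\sigma\colon x\mapsto \stab_G(x)=G_x$ pushes $\mu$ forward to $\lambda$, and since $K$ is irreducible one arranges the action itself to be irreducible (each simple factor ergodic on $X$). Fix a minimal parabolic $P\le G$ and equip $B=G/P$ with an admissible stationary measure, so that $G\curvearrowright B$ is the Furstenberg boundary and, as $P$ is amenable, a Zimmer-amenable action. Then consider the product $G$-space $X\times B$, which factors both onto the pmp space $X$ and onto $B$.

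\textbf{An intermediate factor from the stabilizers.} The geometric heart is a $G$-equivariant factor sitting between $X\times B$ and $X$, built from the stabilizers. For a.e.\ $x$ the group $G_x$ acts on $B$, and I would let $Z$ be the field of $G_x$-ergodic components, i.e.\ the fiber of $Z\to X$ over $x$ is the space of $G_x$-ergodic components of $B$. Since $G_{gx}=gG_xg^{-1}$ this assignment is $G$-equivariant, so $X\times B\to Z\to X$ are honest factor maps. The Nevo–Zimmer intermediate factor theorem then identifies $Z$, measurably over $X$, with a field of partial flag varieties $G/Q_x$ for parabolics $Q_x\supseteq P$: the $G_x$-ergodic decomposition of $B$ is the pullback of a collapse $G/P\to G/Q_x$. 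In particular $G_x$ stabilizes $\nu$-a.e.\ fiber of this collapse, which forces $G_x$ into $N$, the largest normal subgroup of $G$ contained in $Q_x$, while $G_x$ acts ergodically on the fiber $Q_x/P$. By ergodicity of $G\curvearrowright X$ the conjugacy type of $Q_x$ is a.s.\ constant, and being the normal core of $Q_x$ the subgroup $N$ is a single fixed normal subgroup, namely the product of those simple factors $G_i$ whose $Q_x$-component is all of $G_i$. Because the omitted factors commute with $N$ and hence act trivially on $Q_x/P$, the fiber ergodicity says precisely that $G_x$ acts ergodically on the boundary $B_N=N/(N\cap P)$ of $N$.

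\textbf{From the parabolic to co-amenability.} The step I expect to be the main obstacle is upgrading ``$G_x$ acts ergodically on $B_N$'' to genuine co-amenability of $G_x$ in $N$. One must prove the general principle that a closed subgroup acting ergodically on the boundary of the semisimple group $N$ is co-amenable in $N$; this is where amenability of the minimal parabolic is used. The difficulty is that the boundary measure is only stationary, not invariant, so one cannot merely average a $G_x$-equivariant family of means over $B_N$, and the argument must instead combine the amenability of the boundary action with the ergodicity. I would isolate this equivalence as a separate lemma.

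\textbf{The dichotomy.} Finally I would split on $N$. If $N=G$, then $G_x$, and hence $K$, is co-amenable in $G$, giving the second alternative. If $N\subsetneq G$, then $N$ omits some simple factor $G_j$, which commutes with $N\supseteq G_x$; conjugation by $G_j$ therefore fixes $G_x$ pointwise, so $G_j$ acts trivially on $(\subg,\lambda)$. Irreducibility requires this trivial action to be ergodic, forcing $\lambda$ to be a point mass $\delta_{H_0}$. Each factor then normalizes $H_0$, so $H_0\lhd G$ and $K$ equals a fixed closed normal subgroup, the first alternative (consistently, co-amenability of $H_0\lhd N$ in the semisimple $N$ forces $H_0=N$ up to the center). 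This yields the stated dichotomy.
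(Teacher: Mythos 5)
Your proposal follows the original Stuck--Zimmer strategy (realize the IRS as stabilizers, feed the field of stabilizer-ergodic components of the boundary into the intermediate factor theorem), whereas the paper deliberately avoids this route; unfortunately, both places where your route diverges from the paper's are genuine gaps. The first is the realization step. Every IRS is indeed a stabilizer IRS, and ergodicity of $\lambda$ transfers to a realizing action by extremality, but irreducibility does not: the paper's remark that ``the opposite direction is not true in general'' is not a technicality. For instance, if $N\lhd G$ is the product of a proper subset of the simple factors, then $K=\delta_N$ is an irreducible IRS (every factor acts ergodically, indeed trivially, on a point mass), yet in any realizing action all stabilizers contain $N$, so the factors inside $N$ act trivially on $X$ and hence not ergodically unless $X$ is a point; no irreducible realization exists, and your proof cannot start. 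This is precisely why the paper applies the Nevo--Zimmer IFT not to a realizing action but to the conjugation action $G\curvearrowright(\subg,\lambda)$ itself --- which is irreducible by the very definition of an irreducible IRS --- with Bowen's space $\bow{\mu}{\lambda}$ (the bundle over $\supp\lambda$ whose fiber over $\fk$ is $\ergcom{\pb{G,\mu}}{\fk}$) playing the role of your $Z$.

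The second gap is fatal to the co-amenability half of the dichotomy: the ``general principle'' you isolate as a lemma --- a closed subgroup acting ergodically on the Furstenberg boundary of $N$ is co-amenable in $N$ --- is false. The minimal parabolic $P_N\le N$ acts essentially transitively, hence ergodically, on $N/P_N$ (the big Bruhat cell is a single $P_N$-orbit of full measure), and $P_N$ is amenable and acts amenably; yet $P_N$ is not co-amenable in $N$, since an $N$-invariant mean on $L^\infty(N/P_N)$ would restrict to a positive normalized invariant functional on $C(N/P_N)$, i.e.\ an $N$-invariant probability measure on the boundary, which cannot exist for a nontrivial boundary of a noncompact semisimple group. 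So no combination of boundary amenability and ergodicity can prove your lemma; what must be used is the IRS structure itself, which your argument drops after the setup. This is the actual content of the paper's Section 3: conjugation invariance of $\lambda$ enters crucially to show $\phi_*g\theta_n=\lambda$ (via $gKZ_n=gZ_nK^{Z_n^{-1}}$ and $K^{Z_n^{-1}}\sim\lambda$), producing asymptotically $\T$-invariant measures on $\lambdag$ whose disintegration over $\subg$ yields invariant means on $\tmodk$ for $\lambda$-a.e.\ $\fk$. (Equivalently, IRSs can never charge parabolic-type subgroups --- stabilizers of pmp actions satisfy a modular-function compatibility that parabolics violate --- but your lemma, as stated, sees none of this.) Your geometric analysis of $Q$ and the endgame deducing the dichotomy from the structure of normal subgroups are fine and parallel the paper's use of Ragozin's theorem; but as it stands the proof both starts from an unavailable reduction and rests its main step on a false statement, and repairing the latter amounts to redoing the paper's Bowen-space argument.
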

As these theorems do not require the groups to have property (T), they
provide rigidity results on the irreducible IRSs of groups such as
$SL_2(\R) \times SL_2(\R)$, to which the theorems of Bader-Shalom and
Stuck-Zimmer do not apply. We thus give a partial answer to a question
asked in Stuck-Zimmer~\cite[page 731]{stuck1994stabilizers}. It
remains unknown, however, whether $SL_2(\R) \times SL_2(\R)$ has any
irreducible co-amenable IRSs that are not, in fact, co-finite.

One can interpret Theorem~\ref{thm:product-irs-rigidity} as addressing
the following question: What IRSs does a product group admit? The
irreducibility assumption rules outs the trivial example of a product
of IRSs of each group. Thus, Theorem~\ref{thm:product-irs-rigidity}
says that, in a sense, there is not much else.  A motivation for this
question is the following stronger, basic statement that
holds for normal subgroups:
\begin{fact}
  \label{fact:normal}
  Let $G = G_1 \times G_2$ be a topological group. Given a closed
  normal subgroup $N \lhd G$, let $N_1 \lhd G_1$ and $N_2 \lhd G_2$ be
  the closures of the projections of $N$ to $G_1$ and $G_2$.  Then $N$
  is co-abelian in $N_1 \times N_2$.
\end{fact}

Our approach to these question involves the analysis of the
Furstenberg-Poisson boundary of the random walk on $G$ and on coset
spaces of $G$. Our main technical contribution is in proving the
following claim, which is a generalization to IRSs of an analogous
claim that holds for normal subgroups, but not in general for
non-normal subgroups.
\begin{maintheorem}
  \label{thm:co-amenable-irss-intro}
  Let $G$ be a locally compact second countable group and let $\mu$ be
  a probability measure on $G$ that is equivalent to the Haar
  measure. Let $K \leq G$ be an IRS. If the Furstenberg-Poisson boundary of the
  $\mu$-random walk on $K \backslash G$ is almost surely trivial, then
  $K$ is co-amenable in $G$.
\end{maintheorem}
We in fact prove below a more general result
(Theorem~\ref{thm:co-amenable-irss}) which implies
Theorem~\ref{thm:co-amenable-irss-intro}.

\subsection{Applications}
The conclusions of Theorems~\ref{thm:product-irs-rigidity}
and~\ref{thm:lie-irs-rigidity} can be strengthened when more
constraints are imposed on $G$. In particular, we consider the
following notion: An lcsc group is said to be {\em just non-amenable}
if every closed normal subgroup is co-amenable. Note that if a group
is simple or just non-compact then it is also just non-amenable.

\begin{corollary}
  \label{thm:just-non-amenable}
  Let $G=G_1 \times G_2$ be a locally compact second countable group,
  and let both $G_1$ and $G_2$ be just non-amenable. Then every
  irreducible IRS is either co-amenable in $G$ or equal to a normal
  subgroup.
\end{corollary}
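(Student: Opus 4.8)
The plan is to feed the output of Theorem~\ref{thm:product-irs-rigidity} through the just-non-amenability hypothesis, reducing everything to a short case analysis. First I would apply that theorem to the given irreducible IRS $K$, obtaining closed normal subgroups $N_1 \lhd G_1$ and $N_2 \lhd G_2$ such that $K$ is almost surely contained in, and co-amenable in, $N_1 \times N_2$. Since each $G_i$ is just non-amenable, every nontrivial closed normal subgroup of $G_i$ is co-amenable in $G_i$; thus each $N_i$ is either trivial or co-amenable in $G_i$, and this dichotomy drives the rest of the argument.

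In the case where both $N_1$ and $N_2$ are co-amenable in their respective factors, the quotients $G_i/N_i$ are amenable, so $(G_1 \times G_2)/(N_1 \times N_2) \cong (G_1/N_1) \times (G_2/N_2)$ is a product of amenable groups and hence amenable; equivalently, $N_1 \times N_2$ is co-amenable in $G$. I would then invoke transitivity of co-amenability for the tower $K \leq N_1 \times N_2 \leq G$ to conclude that $K$ is almost surely co-amenable in $G$, which is the first alternative in the conclusion. The main point to check here is that transitivity of co-amenability is legitimate even though $N_1 \times N_2$ need not be normal in $G$ as seen from $K$; this is a standard property of co-amenability, and since $N_1 \times N_2$ is a fixed subgroup the almost-sure statement passes through without difficulty.

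The remaining case is where at least one factor, say $N_1$, is trivial, and this is where I expect the only genuine subtlety. Then $K$ lies almost surely in $\{e\} \times N_2 \subseteq \{e\} \times G_2$, and conjugation by any element of $G_1 = G_1 \times \{e\}$ acts trivially on $\{e\} \times G_2$, hence fixes every subgroup contained in it. Consequently $G_1$ acts trivially on a $\lambda$-conull subset of $\subg$, where $\lambda$ is the law of $K$. Because $K$ is irreducible, the $G_1$-action on $(\subg,\lambda)$ is ergodic, and an ergodic action that is trivial on a conull set forces $\lambda$ to be a point mass $\delta_{K_0}$. Finally, $G$-conjugation-invariance of $\lambda$ makes $K_0$ normal in $G$, so $K$ equals the normal subgroup $K_0$, giving the second alternative; the sub-case where both $N_i$ are trivial is subsumed here, with $K=\{e\}$. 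The step that requires the most care is the passage from ergodicity of a trivial action to a point mass, together with verifying that the conclusion of Theorem~\ref{thm:product-irs-rigidity} really yields containment $K \leq N_1 \times N_2$ and not merely a co-amenability relation.
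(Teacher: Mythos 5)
Your proposal is correct and takes essentially the same route as the paper: apply Theorem~\ref{thm:product-irs-rigidity}, use just non-amenability to split into the case where both $N_i$ are co-amenable (whence $N_1\times N_2$, and by transitivity $K$, is co-amenable in $G$) and the case where some $N_i$ is trivial (whence irreducibility forces $K$ to equal a normal subgroup). Your point-mass argument in the trivial-factor case is simply the detailed justification of the paper's terse assertion that ``if either $N_1$ or $N_2$ is trivial then $K$ must equal $N_1\times N_2$,'' and your observation that co-amenability presupposes containment $K\le N_1\times N_2$ is the right way to resolve the worry you raise.
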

As noted above, this holds in particular when $G_1$ and $G_2$ are
simple. In that case, it is tempting to conjecture (see
Stuck-Zimmer~\cite[page 731]{stuck1994stabilizers}) that in fact every
irreducible IRS is either equal to a normal subgroup or is {\em
  co-finite}; recall that $K \leq G$ is said to be co-finite if there
exists a $G$-invariant finite measure on $G/K$\footnote{In fact,
  co-finite IRSs admit some more structure: any ergodic co-finite IRS
  is supported on a single orbit $\{H^g\}_{g\in G}$, for some
  co-finite $H \le G$ (see Corollary~\ref{cor:tame}). In other words,
  the $G$-action on the IRS is essentially transitive.  }. Bader and
Shalom~\cite{bader2006factor} prove that this is indeed the case when
$G_1$ and $G_2$ both have property (T). Their work continues the work
of Stuck and Zimmer~\cite{stuck1994stabilizers}, who draw the same
conclusions for high rank semi-simple Lie groups whose every simple
factor has property (T).

In the following two corollaries we show that it suffices that only
one of the factors have property (T), both in the product group setting
and in the Lie group setting.
\begin{corollary}
  \label{thm:rigidity-special}
  Let $G = G_1 \times G_2$ be a locally compact second countable
  group, and let $G_1$ be just non-compact and have property (T). Let
  $G \curvearrowright (X,\im)$ be a faithful irreducible pmp action.

  Then the action $G \curvearrowright (X,\im)$ is either essentially
  free or essentially transitive. It follows that the associated
  stabilizer IRS is either trivial or co-finite in $G$.
\end{corollary}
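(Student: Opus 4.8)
The plan is to pass to the stabilizer IRS and let the two extra hypotheses on $G_1$—property (T) and just non-amenability—force a dichotomy. Let $K$ be the stabilizer IRS of the action, so that $K = \stab(x)$ for $\im$-random $x$; since the action is irreducible, $K$ is an irreducible IRS. By Theorem~\ref{thm:product-irs-rigidity} there are closed normal subgroups $N_1 \lhd G_1$ and $N_2 \lhd G_2$ with $K$ co-amenable in $N_1 \times N_2$. First I would analyze $N_1$ using the hypotheses on $G_1$: as $G_1$ is just non-amenable, $N_1$ is either trivial or co-amenable in $G_1$. In the latter case $G_1/N_1$ is amenable and, being a quotient of $G_1$, has property (T); an amenable group with property (T) is compact, so $N_1$ is co-finite in $G_1$ and in particular inherits property (T).

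Next I would dispose of the degenerate cases $N_1 = \{e\}$ or $N_2 = \{e\}$. If $N_1 = \{e\}$ then $K$ takes values in $\{e\} \times N_2 \le \{e\} \times G_2$, so every value of $K$ is centralized by $G_1 \times \{e\}$ and is hence fixed by the conjugation action of $G_1$ on $\subg$. Irreducibility makes this $G_1$-action ergodic, and an ergodic action all of whose points are fixed is supported on a single point; thus the law of $K$ is a point mass, i.e.\ $K$ is a deterministic subgroup, which by $G$-invariance of its law is normal in $G$. Since $K$ a.s.\ equals this normal subgroup, it coincides with the intersection of all stabilizers, namely the kernel of the action, which is trivial by faithfulness; hence $K = \{e\}$ and the action is essentially free. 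The case $N_2 = \{e\}$ is symmetric.

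It remains to treat the case where both $N_1$ and $N_2$ are nontrivial, so $N_1$ is co-finite in $G_1$ and Kazhdan; here I expect the transitive alternative. Running the previous paragraph's argument on the normal subgroups $N_1 \times \{e\}$ and $\{e\} \times N_2$ (each centralized by the complementary factor) shows, via faithfulness, that $K$ meets each factor trivially, so both coordinate projections are injective on $K$ and exhibit $K$ as the graph of an isomorphism of its (dense-image) projections, exactly as for an irreducible lattice. The main obstacle is to upgrade co-amenability of $K$ in $N_1 \times N_2$ to co-finiteness. The only Kazhdan input is property (T) of $N_1$: co-amenability means the trivial representation is weakly contained in the quasi-regular representation on $L^2((N_1 \times N_2)/K)$, and restricting to $N_1$ and invoking its property (T) yields a genuine $N_1$-invariant vector. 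The hard part—and the crux of the whole statement—is to promote this partial invariance to a full $(N_1 \times N_2)$-invariant vector, equivalently to a finite invariant measure on the orbit, using the rigidity of the graph coupling together with the ergodicity of the complementary factor. This is where the Stuck--Zimmer phenomenon enters and where the property-(T) mechanism behind Corollary~\ref{thm:rigidity} must be adapted to the asymmetric situation in which only one factor is Kazhdan.

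Once $K$ is shown to be co-finite, Corollary~\ref{cor:tame} identifies its law with the conjugacy class of a single co-finite subgroup, and an ergodic pmp action with co-finite stabilizers is essentially transitive; together with the free case this gives the claimed dichotomy. The concluding sentence is then immediate: an essentially free action has trivial stabilizer IRS, while an essentially transitive action has the form $G/H$ with $H$ co-finite, whose stabilizer IRS is co-finite in $G$.
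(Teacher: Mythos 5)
Your setup matches the paper's for most of the route: you pass to the stabilizer IRS $K$, apply Theorem~\ref{thm:product-irs-rigidity}, and use just non-amenability of $G_1$ to split into the case $N_1=\{e\}$ (where your ergodicity-plus-faithfulness argument for essential freeness is correct and is essentially the paper's argument) and the case $N_1$ co-amenable in $G_1$. You also correctly identify the property (T) mechanism converting an invariant mean into a genuine invariant vector, hence a finite invariant measure. But in the main case your proof stops at exactly the point that carries all the weight: you write that the ``hard part'' is to promote the $N_1$-invariant vector to a full $(N_1\times N_2)$-invariant one, and you appeal to an unspecified ``Stuck--Zimmer phenomenon'' and ``rigidity of the graph coupling'' without carrying this out. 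That is a genuine gap, not a sketch of a known step --- and, notably, the paper never performs such a promotion at all.

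What the paper does instead: since $N_1$ is co-amenable in $G_1$, $K$ is co-amenable in the larger group $G_1\times N_2$ (not merely in $N_1\times N_2$); property (T) of $G_1$ then yields a $G_1$-invariant probability measure on $\im$-almost every $G_1\times N_2$-orbit, hence on almost every $G$-orbit; and then Lemma~\ref{lem:co-finite-stabs2} is invoked: for an almost direct product $G=G_1G_2$ acting $G_1$-ergodically, $G_1$-invariant probability measures on almost every $G$-orbit already force essential transitivity. This lemma --- proved via Varadarajan's ergodic decomposition for the $G_1$-action together with the essential $G$-equivariance of the $G_1$-decomposition map, itself a delicate point when $G_2$ is uncountable --- is precisely the ingredient you are missing: one never needs $(N_1\times N_2)$-invariance or co-finiteness of $K$ in $N_1\times N_2$; one-factor invariance plus irreducibility suffices, transitivity is concluded first, and co-finiteness of the stabilizer is read off afterwards. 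Two further repairs your version would need even granting your ``hard part'': working with $N_1$-invariance rather than $G_1$-invariance does not fit the hypotheses of Lemma~\ref{lem:co-finite-stabs2} (though one could average over the compact group $G_1/N_1$ to upgrade $N_1$-invariant orbit measures to $G_1$-invariant ones); and your closing claim that co-finite stabilizers give essential transitivity (via Corollary~\ref{cor:tame} and Lemma~\ref{lem:co-finite-stabs}) requires co-finiteness in $G$, whereas your intended upgrade would only give co-finiteness in $N_1\times N_2$, so an additional argument would still be required there.
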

This constitutes a strengthening of the Essentially Free Actions
Theorem of Bader-Shalom; they require that both $G_1$ and $G_2$ have
property (T) and be just non-compact.

\begin{corollary}
  \label{thm:stuck-zimmer}
  Let $G$ be a connected semi-simple Lie group with finite center, no
  compact factors and $\R$-rank $\ge 2$. Assume that one of the simple
  factors of $G$ has property (T). Then any faithful irreducible pmp
  $G$-action is either essentially free or essentially transitive, and
  its associated stabilizer IRS is either trivial or a lattice in $G$.
\end{corollary}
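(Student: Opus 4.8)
The plan is to analyze the stabilizer IRS $K$ of the given faithful irreducible pmp action $G \curvearrowright (X,\im)$ and feed it into Theorem~\ref{thm:lie-irs-rigidity}. Since the action is irreducible, so is its stabilizer IRS $K$ (as noted in the introduction), and the theorem gives the dichotomy: either $K$ is almost surely equal to a closed normal subgroup, or $K$ is co-amenable in $G$. I would treat the two alternatives separately, showing that the first yields an essentially free action with trivial stabilizer, and the second an essentially transitive action with a lattice stabilizer.

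In the normal case, note that a connected semi-simple group with finite center has only finitely many closed connected normal subgroups, and all closed normal subgroups are conjugation-invariant; hence an ergodic IRS taking values in normal subgroups is a point mass, so $K$ equals a fixed $N \triangleleft G$ almost surely. As $N = \stab(x)$ for $\im$-a.e.\ $x$, the subgroup $N$ fixes almost every point, so $N$ lies in the kernel of the action; faithfulness then forces $N = \{e\}$. Thus the stabilizer is trivial and the action is essentially free.

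The substantive case is that $K$ is co-amenable in $G$, where I must extract co-finiteness, and this is where property (T) of the simple factor $G_1$ enters. Co-amenability means that for $\im$-a.e.\ realization $K = H$ the trivial representation is weakly contained in the quasi-regular representation $L^2(G/H)$. Restriction preserves weak containment, and property (T) of $G_1$ upgrades weak containment of the trivial $G_1$-representation to genuine containment, producing a nonzero $G_1$-invariant vector and hence a $G_1$-invariant probability measure $\mu$ on $G/H$. Since $G_1 \triangleleft G$, disintegrating $\mu$ along the $G$-equivariant map $G/H \to G/(G_1 H)$ shows that the fibers $G_1/(G_1 \cap H)$ carry finite invariant measure, i.e.\ $G_1 \cap H$ is co-finite in $G_1$; ergodicity makes the event $\{G_1 \subseteq K\}$ trivial, and faithfulness excludes it, so $G_1 \cap H$ is almost surely a lattice in $G_1$. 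The main obstacle is then to upgrade this to co-finiteness of $H$ in all of $G$: property (T) gives leverage only on $G_1$, so the non-(T) factors (the rank-one factors, where $G$ itself need not have (T), as for $SL_3(\R)\times SL_2(\R)$) require a separate argument. I expect this to follow from the fact that for the simple Lie groups in question a co-amenable closed subgroup is already co-finite, combined with the higher-rank hypothesis and the irreducibility of $K$, which forbid the stabilizer from being large along some factors and small along others. Isolating this as a clean lemma about co-amenable subgroups of semi-simple groups with a property-(T) factor is where I would concentrate the work, and it is the technical heart of the corollary.

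Finally, granting that $K = H$ is co-finite, I would conclude as follows. By Borel-density-type arguments the identity component $H^0$ is normal in $G$; as an ergodic IRS valued in the finite set of connected normal subgroups it is a fixed normal subgroup, which fixes almost every point and is therefore trivial by faithfulness. Hence $H$ is discrete, and discrete together with finite covolume means $H$ is a lattice. Since $K$ is an ergodic co-finite IRS, Corollary~\ref{cor:tame} shows it is supported on a single orbit $\{H^g\}_{g \in G}$, so $(X,\im)$ is essentially the transitive action on $G/H$; the action is thus essentially transitive with lattice stabilizer, completing the dichotomy.
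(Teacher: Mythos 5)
There is a genuine gap, and it sits exactly where you yourself flag it: the passage from a $G_1$-invariant probability measure on $G/H$ to co-finiteness of $H$ in all of $G$. You propose to close it with a lemma asserting that co-amenable closed subgroups of the relevant simple (rank-one, non-(T)) factors are automatically co-finite, combined with irreducibility of the IRS; but you never prove such a lemma, and property (T) gives you nothing on those factors, so this is not a routine verification --- it is the heart of the matter, left open. Your proof also runs in the wrong direction relative to what the available tools support: you try to establish co-finiteness of the stabilizer \emph{first} and deduce transitivity afterwards. The paper does the opposite and thereby avoids the problematic step entirely. Writing $G = G_1 G_2$ as an almost direct product ($G_2$ the product of the remaining simple factors), the paper feeds the $G_1$-invariant probability measures on $\im$-almost every $G$-orbit (which you correctly obtain from co-amenability plus property (T) of $G_1$) into Lemma~\ref{lem:co-finite-stabs2}, whose other hypothesis is $G_1$-ergodicity of $(X,\im)$ --- supplied by irreducibility of the \emph{action}, a hypothesis your co-amenable case never uses. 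That lemma yields essential transitivity directly, and co-finiteness of $G_x$ then comes for free, since $\im$ itself is a $G$-invariant probability measure on the single orbit $G/G_x$; the Borel Density Theorem then gives discreteness, hence a lattice, as you say.

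Two smaller points. First, your intermediate claim that $G_1 \cap H$ is a lattice in $G_1$ via disintegration along $G/H \to G/(G_1 H)$ is shaky as stated ($G_1 H$ need not be closed), though this does not matter since that route should be abandoned. Second, your final appeal to Corollary~\ref{cor:tame} is a misfire: that corollary says the \emph{distribution of the IRS} is supported on a single conjugacy orbit $\{H^g\}$ in $\subg$; it does not say the action $(X,\im)$ is essentially transitive. The statement you actually need there --- co-finite stabilizers on almost every orbit plus ergodicity imply essential transitivity --- is Lemma~\ref{lem:co-finite-stabs}, so this slip is fixable, but in the paper's argument it never arises because transitivity is obtained before co-finiteness, not after.
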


Finally, we show that similar results can be derived without property
(T), given that $G_1$ is simple non-amenable and $G_2$ is simple
discrete.
\begin{corollary}
  \label{thm:discrete}
  Let $G_1$ be a simple, non-amenable, locally compact second
  countable group, and let $G_2$ be a simple, countable, discrete
  group. Then every non-trivial irreducible pmp action of
  $G_1 \times G_2$ is essentially free. It follows that every
  irreducible IRS in $G$ is equal to a normal subgroup.
\end{corollary}

\subsection{Related work}
Recently, Creutz~\cite{creutz2013stabilizers} proved
Corollary~\ref{thm:stuck-zimmer} independently, using a different
approach. In the same paper he also generalizes Bader-Shalom's theorem
to the case that $G_1$ has property (T) and both $G_1$ and $G_2$ are
simple.

Creutz and Peterson~\cite{creutz2013stabilizers-of-ergodic} prove
similar rigidity results for irreducible lattices and commensurators
of lattices in semi-simple Lie groups, and also for product groups
with the Howe-Moore property and property (T).

In~\cite{abert2012growth} it is shown that in the setting of
Corollary~\ref{thm:stuck-zimmer}, if $G$ has property (T) then every
irreducible IRS is either equal to a normal subgroup or is a lattice.


\subsection{Acknowledgments}
We would like to thank Uri Bader, Amos Nevo, Jesse Peterson and
Benjamin Weiss for useful discussions and motivating conversations. We
would also like to thank Yehuda Shalom and Lewis Bowen for helpful
comments on an early draft of this article.

\section{The Chabauty topology and the normal closure of an IRS}
Let $X$ be a locally compact topological space. The space of all
closed subsets of $X$, $\closedsubs{X}$, admits a natural topology
called the {\em compact topology}, under which it is a compact
Hausdorff space (see, e.g.~\cite{wattenberg1977}). If $G$ is a locally
compact group, then $\subg \subset \closedsubs{G}$, the set of closed
subgroups of $G$, is a closed subset and the induced topology on
$\subg$ is known as the Chabauty
topology~\cite{chabauty1950limite}. If $G$ is furthermore second
countable then $\subg$ is a metrizable space, and in particular is
second countable.

In the Chabauty topology, a sequence $\{H_n\}$ of subgroups in $\subg$
converges to $H \in \subg$ if and only if
\begin{enumerate}
\item For every $h\in H$ there exists a sequence $h_n \to h$ such that
  $h_n \in H_n$.
\item If $h_n \to h$ and $h_n \in H_n$ then $h \in H$.
\end{enumerate} 

$G$ acts naturally on $\subg$ by conjugation, and under the Chabauty
topology this action is continuous. An {\em invariant random subgroup}
(IRS) is a Borel probability measure on $\subg$ which is invariant
under this conjugation action. Note that this is a slight (but
standard) abuse of nomenclature; more precisely, an IRS is a random
variable $K$ taking values in $\subg$ whose law is a conjugation
invariant Borel probability measure.  We denote the space of all IRSs
of a given group $G$ by $\irsg$. The term invariant random subgroup
was introduced by Ab{\'e}rt, Glasner, and
Vir{\'a}g~\cite{abert2012growth}, although the mathematical object has
been studied earlier - for example by Stuck and
Zimmer~\cite{stuck1994stabilizers}.

Given an IRS, we want to define its normal closure, which is the
smallest subgroup on which the IRS ``lives''; an IRS $K$ lives in some
$H < G$ if $K$ is almost surely contained in $H$.
\begin{definition}
  Let $K$ be an IRS in $G$ with law $\lambda$.  The {\em normal
    closure} of $\lambda$, denoted $\nc{\lambda}$, is the minimal
  closed subgroup in $G$ that almost surely contains $K$:
  \begin{align*}
    \nc{\lambda} = \min\big\{H \in \subg\,:\,\lambda(\{H'\,:H' \leq
    H\})=1\big\}.
  \end{align*}
\end{definition}
Equivalently,
$\nc{\lambda} = \min\{H \in \subg\,:\,\lambda(\sub{H})=1\}$, where
$\sub{H}$ is the space of closed subgroups of $H$.

It is not obvious from this definition that the normal closure
exists. However, provided that it exists, it is immediate that it is
unique, and applying the conjugation invariance of $\lambda$ yields
that it is normal.

The existence of the normal closure is established in the next
proposition, which also provides an equivalent definition for
it. Before stating the theorem we will introduce the following
notation: For a Borel set $A \subseteq \subg$ we denote by $\sg{A}$
the subgroup of $G$ generated by all the elements of the all groups in
$A$, and by $\nc{A}\in\subg$ the topological closure of this subgroup.
\begin{proposition}
  \label{thm:normal-closure}
  Let $G$ be an lcsc group and let $\lambda \in \irs{G}$. Then
  $\nc{\lambda} = \nc{\supp\lambda}$: the normal closure of $\lambda$
  is equal to the closure of the group generated by all the groups in
  the support of $\lambda$.
\end{proposition}
\begin{proof}
  Let $K$ have distribution $\lambda$, and denote $N = \nc{\lambda}$.
  Since $\subg$ is second countable, $\supp\lambda$ is a
  $\lambda$-full measure set and so $K$ is almost surely a subgroup of
  $N$. It follows that $N \leq \nc{\supp\lambda}$, by the definition
  of the normal closure.

  Thus to prove the claim it suffices to show that that if
  $H \in \subg$ is such that $K$ is almost surely a subgroup of $H$,
  then $\nc{\supp\lambda}$ is a subgroup of $H$.  Note that
  \begin{enumerate}
  \item $\lambda(\sub{H})=1$, since $K$ is almost surely a subgroup of
    $H$.
  \item $\sub{H}$ is a closed set in $\subg$.
  \end{enumerate}
  So $\sub{H}$ is a closed $\lambda$-full measure set, and as
  such must include $\supp \lambda$; here we again use the fact that
  $\subg$ is second countable. It follows that $H$ includes $\nc{\supp
    \lambda}$, and we have proved the claim.

\end{proof}

\section{Poisson bundles and co-amenable IRSs}
\label{sec:bowen-spaces}

\subsection{Random walks on groups and the Furstenberg-Poisson boundary}
Let $G$ be a locally compact second countable group, and let $\mu$ be
a probability measure on $G$ that is equivalent to the Haar measure;
that is, let $\mu$ and the Haar measure be mutually absolutely
continuous\footnote{Some of our intermediate results could be
  generalized to more general $\mu$ (e.g., any $\mu$ absolutely
  continuous with respect to the Haar measure), but we choose not to
  pursue this, in order to simplify the proofs.}. We will tersely say
that $\mu$ is Haar-equivalent.

A $\mu$-random walk on a group $G$ is a measure $\mathbb{P}_\mu$ on
$G^\N$ given by the push-forward of the product measure $\mu^\N$ under
the map $(h_1,h_2,h_3,\ldots) \mapsto (h_1,h_1h_2,h_1h_2h_3,\ldots)$.
Equivalently, let $\{h_n\}_{n \in \N}$ be i.i.d.\ random variables
with measure $\mu$, and let $g_n = h_1 \cdots h_n$. Then a
$\mu$-random walk is the distribution of
$(h_1,h_1h_2,h_1h_2h_3,\ldots)=(g_1,g_2,\ldots)$.

The shift-action on $G^\N$ is given by
$(g_1,g_2,g_3,\ldots) \mapsto (g_2,g_3,\ldots)$. The {\em
  Furstenberg-Poisson boundary}~\cite{furstenberg1971random,
  furstenberg1974boundary} of a $\mu$-random walk, denoted by
$\pb{G,\mu}$, is Mackey's point realization~\cite{mackey1962point} of
the shift-invariant sigma-algebra of $(G^\N,\mathbb{P}_\mu)$ (see,
e.g.,~\cite{zimmer1978amenable, bowen2010random}).

A related process is the $\mu$-random walk on coset spaces of
$G$. Indeed, for any $H \in \subg$, let $\{g_n\}_{n \in \N}$ be as
above. Then $(H g_1, H g_1 g_2,\ldots)$ is a $\mu$-random walk on
$H \backslash G$.

\subsection{Coset spaces}
  
Let $\cosg \subset \closedsubs{G}$ denote the space of all left cosets
of closed subgroups of $G$:
\begin{align*}
  \cosg = \left\{g \fk \,:\, g \in G, \fk \in \subg \right\}.
\end{align*}
As a closed subset in $\closedsubs{G}$, it is naturally equipped with
the corresponding induced topology, and with a continuous $G$ left
action given by $k(g H) = k g H$.

An equivalent definition is to let $\cosg$ be the space of {\em right} cosets
\begin{align*}
  \cosg = \left\{ \fk g \,:\, g \in G, \fk \in \subg \right\}.
\end{align*}
This is indeed equivalent since every left coset $g\fk = g\fk g^{-1}g
= \fk^g g$ is also a right coset. The description of $\cosg$ as a space
of right cosets makes it clear that $G$ also acts on $\cosg$ from the
right by $(Hg)k = H g k$. Note that both the right and left actions on
$\cosg$ are continuous, and that these two actions commute.
  
There are two natural projections $\pi_l,\pi_r\colon \cosg \to \subg$:
\begin{align*}
 \pi_l \colon g H &\mapsto H\\
 \pi_r \colon H g &\mapsto H
\end{align*}
Note that $\pi_r$ is $G$-equivariant with respect to the left
$G$-action on $\cosg$ and $\subg$:
\begin{align*}
  (\pi_r(H g))^k = H^k = \pi_r(H^k k g) = \pi_r(k H g) ,
\end{align*}
and that it is invariant to the right $G$-action:
\begin{align*}
  \pi_r(H g) = H = \pi_r(H g k).
\end{align*}
Similar statements can be made for $\pi_l$.

\subsection{Poisson bundles}
\label{sec:bowenspaces}
Kaimanovich introduces Poisson boundles
in~\cite{kaimanovich2005amenability}. Later, these were also studied
by Bowen in~\cite{bowen2010random}.  Fix $\lambda \in \irsg$, let
$\mu$ be a probability measure on $G$, and consider the space
$\cosg^\N$, endowed with the product topology. $G$ acts on the left by
the diagonal action.

A natural stochastic process on $\cosg^\N$ can be constructed as
follows. Choose $K$ at random from $\lambda$, choose
$(h_1,h_1h_2,\ldots)$ from $\mathbb{P}_\mu$, the $\mu$-random walk on
$G$, and let $C_n \in \cosg$ be given by $K h_1 h_2 \cdots h_n$. Then
$C_n$ has distribution $\lambda * \mu^n$ (here and below $*$ denotes
convolution, and $\mu^n$ are the convolution powers of $\mu$), and it
is easy to verify that $(C_1,C_2,\ldots)$ is a Markov chain on
$\cosg^\N$ with initial distribution $\lambda * \mu$.

To define the measure of this Markov chain formally, equip $\cosg^\N$
with the measure $\mathbb{P}_\mu^\lambda$ given by the push-forward 
$\kappa_*\left(\mathbb{P}_\mu \times \lambda\right)$ where
\begin{equation*}
  \begin{array}{l l c l}
    \kappa \colon &G^\N \times \subg &\longrightarrow& \cosg^\N\\
    &((g_1,g_2,\ldots), H) & \longmapsto & (H g_1, H g_2, \ldots)
\end{array}
\end{equation*}
Then the process $(C_1,C_2,\ldots)$ has distribution $\mathbb{P}^\lambda_\mu$.

It is straightforward to check that $\kappa$ is a $G$-equivariant map
from $G^\N \times \subg$ to $\cosg^\N$, and that
$\mathbb{P}_\mu^\lambda$ is supported on elements of the form
$(\fk g_1, \fk g_2,\ldots)$. On these, the left $G$-action on
$\cosg^\N$ is given by
\begin{align*}
  g(\fk g_1, \fk g_2,\ldots) &=  (g\fk g_1,g\fk
  g_2,\ldots)\\
  &= (\fk^g g g_1,\fk^g g g_2,\ldots).
\end{align*}
Hence the measure $g_*\mathbb{P}_{\mu}^\lambda$ corresponds to
choosing $\fk$ from $\lambda$ and then choosing a $\mu$-random walk on
$\modl{\fk^g}{G}$, starting at $\fk^g g$.  Note, however, that since
$\lambda$ is conjugation invariant, $\fk^g=g\fk g^{-1}$ and $\fk$ have the same
distribution. Note also that since $\mu$ is equivalent to the Haar
measure then $\mathbb{P}_\mu^\lambda$ is $G$-quasi-invariant; in fact,
\begin{align}
  \label{eq:quasi-inv}
  \frac{d g_*(\mathbb{P}_\mu \times \lambda)}{d (\mathbb{P}_\mu \times \lambda)}((g_1,
  g_2,\ldots), H) =
  \frac{d g_*\mu}{d\mu}(g_1),
\end{align}
by the Markov property of the $\mu$-random walk on $G$. Hence
$\mathbb{P}_\mu \times \lambda$ is $G$-quasi-invariant, and so its
push-forward $\mathbb{P}_\mu^\lambda$ is likewise $G$-quasi-invariant.

Note that $\kappa$ is shift-equivariant, if we act in the obvious way
by shifts on $G^\N \times \subg$ and $\cosg^\N$.  The shift-invariant
sigma-algebra defines the Poisson bundle $\bow{\mu}{\lambda}$:
\begin{definition}
  Given $\lambda \in \irsg$, denote by $\bow{\mu}{\lambda}$ Mackey's
  point realization of the shift-invariant sigma-algebra on
  $(\cosg^\N, \mathbb{P}_\mu^\lambda)$.  We shall refer to
  $\bow{\mu}{\lambda}$ as a {\em Bowen space}.
\end{definition}
This presentation is slightly different than
Bowen's~\cite{bowen2010random}, but only semantically so: Bowen
defines $\bow{\mu}{\lambda}$ as a fiber bundle over $(\subg,\lambda)$,
where the fiber over $H < G$ is the Furstenberg-Poisson boundary of the
$\mu$-random walk on $H \backslash G$. This is simply the disintegration of
$\bow{\mu}{\lambda}$ with respect to the factor
$\hat\pi_r \colon \bow{\mu}{\lambda} \to (\subg,\lambda)$ defined in
Proposition~\ref{prop:bowen-space-if}; see
also~\eqref{eq:bowen-space-decompo} and the preceding paragraph. We
encourage the reader to study the details in Bowen’s
paper~\cite{bowen2010random}. For further discussion and another
application of Poisson bundles see~\cite{hartman2012furstenberg}.

Another point of view is that, as the Mackey realization of the
shift-invariant sigma-algebra, the Poisson bundle $\bow{\mu}{\lambda}$ is
the Furstenberg-Poisson boundary of the Markov chain $(C_1,C_2,\ldots)$ described
in the beginning of this section; this is simply the definition of the
Furstenberg-Poisson boundary of a Markov chain. Since $G$ acts on $\cosg^\N$,
Mackey's realization provides us also with a $G$-action on
$\bow{\mu}{\lambda}$. This action can be interpreted as follows: The
application of $g \in G$ to $\bow{\mu}{\lambda}$, the Furstenberg-Poisson boundary
of the Markov chain with initial distribution $\lambda * \mu$, yields
the Furstenberg-Poisson boundary of the same Markov chain, with initial
distribution $g_*(\lambda * \mu) = \lambda * (g_*\mu)$.

\subsection{Co-amenable IRSs}
\label{sec:co-amenable}
The following result is due to
Kaimanovich~\cite{kaimanovich2002thepoisson}.
\begin{theorem}
  \label{clm:normal-co-amenable}
  Let $G$ be an lcsc group, and let $\mu$ be a Haar-Equivalent
  probability measure on $G$.  Let $N' \leq N$ be two closed normal
  subgroups of $G$, and let $\bar\mu$ be the projection of $\mu$ to
  $G/N'$. If the $N$-action on the Furstenberg-Poisson boundary
  $\pb{G/N',\bar\mu}$ is measure preserving, then $N'$ is co-amenable
  in $N$.

  In particular, if the Furstenberg-Poisson boundary of the $\bar\mu$-random walk
  on $G/N'$ is trivial, then $N'$ is co-amenable in $G$.
\end{theorem}

This theorem does not hold in general for non-normal subgroups. In
this section we prove the following theorem, which shows that it does
hold for IRSs.
\begin{theorem}[Co-amenable IRSs]
  \label{thm:co-amenable-irss}
  Let $G$ be an lcsc group, and let $\mu$ be a Haar-Equivalent
  probability measure on $G$. Let $K \leq G$ be an IRS with
  distribution $\lambda$, and let $N \lhd G$ be a closed normal group
  such that almost surely $K \leq N$. If the $N$-action on the Poisson
  bundle $\bow{\mu}{\lambda}$ is measure preserving, then $K$ is
  co-amenable in $N$.

  In particular, if the Furstenberg-Poisson boundary of the $\mu$-random walk on
  $K \backslash G$ is almost surely trivial, then $K$ is co-amenable
  in $G$.
\end{theorem}
The rest of this section is devoted to proving this theorem.

The next lemma can be deduced from known general results about the
Furstenberg-Poisson boundaries of Markov chains (see, e.g.,~\cite[Lemma
2.1]{kaimanovich1992measure}). We provide its proof here for the
reader's convenience. Recall that we assume throughout that $\mu$ is
equivalent to the Haar measure. We denote by $\|\cdot\|$ the total
variation norm.
\begin{lemma}
  \label{lem:invariant-finitary}
  Let $(B,\nu) = \bow{\mu}{\lambda}$. If $g_*\nu = \nu$ for some $g
  \in G$ then
  \begin{align*}
    \lim_n \|g_*\lambda * \mu^n - \lambda * \mu^n\| = 0.
  \end{align*}
\end{lemma}
Recall that $\lambda * \mu^n$ is the projection of
$\mathbb{P}_\mu^\lambda$ on the $n$\textsuperscript{th} coordinate, or
the position of the random walk at time $n$. Likewise, $g_*\lambda *
\mu^n = \lambda * g_* \mu^n$ is the position at time $n$ when the
initial distribution of the random walk is $\lambda * g_*\mu$ rather
than $\lambda * \mu$.

Intuitively, the Furstenberg-Poisson boundary distribution is the distribution of
the random walk at time infinity. The theorem hypothesis $g_*\nu =
\nu$ means that the Furstenberg-Poisson boundary is unchanged when the random walk
is initially displaced by $g$. The claim is that under the same
displacement, the distributions of the positions at large times $n$
are also similar.

\begin{proof}
  Let $\cT_n^m$ be the sigma-algebra of
  $(\cosg^\N,\mathbb{P}_\mu^\lambda)$ consisting of the events
  measurable in coordinates $(n,n+1,\ldots,m)$, and let $\cT =
  \cap_n\cT_n^\infty$ be the tail sigma-algebra. We first note that
  $\cT$ and the shift-invariant sigma-algebra coincide, mod
  $\mathbb{P}_\mu^\lambda$ null sets. This is not true for general
  Markov chains, but it does hold for random walks on groups (see,
  e.g.,~\cite{kaimanovich1983random}), from which it easily follows
  that it also holds here.

  Hence $\bow{\mu}{\lambda}$ can be taken to be the Mackey point
  realization of $\cT$, the tail sigma-algebra on
  $(\cosg^\N, \mathbb{P}_\mu^\lambda)$.  Since
  $g_*\mathbb{P}_\mu^\lambda$ and $\mathbb{P}_\mu^\lambda$ are
  equivalent (see~\eqref{eq:quasi-inv} and the subsequent paragraph),
  the hypothesis $g_*\nu = \nu$ means that
  \begin{align}
    \label{eq:tail-events}
    \mathbb{P}_\mu^\lambda(g^{-1} T) = \mathbb{P}_\mu^\lambda(T)
    \mbox{ for every tail event } T \in \cT.
  \end{align}

  Let
  \begin{align*}
    d_n = \sup_{T \in \cT_n^\infty}\|g_*\mathbb{P}_\mu^\lambda(T) -
    \mathbb{P}_\mu^\lambda(T)\|
  \end{align*}
  be the total variation distance between
  $g_*\mathbb{P}_\mu^\lambda(T)$ and $\mathbb{P}_\mu^\lambda(T)$,
  taken as measures over $(\cosg^\N,\cT_n^\infty)$. Then by (say) the
  Martingale convergence theorem,
  \begin{align*}
    \lim_n d_n = \sup_{T \in \cT}\|g_*\mathbb{P}_\mu^\lambda(T) -
    \mathbb{P}_\mu^\lambda(T)\| = 0,
  \end{align*}
  where the second equality follows from~\eqref{eq:tail-events}.

  By the Markov property of this $\mu$-random walk, one can take the
  supremum in the definition of $d_n$ to be only over events
  measurable in the $n$\textsuperscript{th} coordinate only:
  \begin{align*}
    d_n = \sup_{T \in \cT_n^n}\|g_*\mathbb{P}_\mu^\lambda(T) -
    \mathbb{P}_\mu^\lambda(T)\|.
  \end{align*}
  This expression is in turn equal to
  \begin{align*}
    \half\|g_*\lambda * \mu^n - \lambda * \mu^n\|,
  \end{align*}
  since the projection of $\mathbb{P}_\mu^\lambda$ on its
  $n$\textsuperscript{th} coordinate is $\lambda * \mu^n$.  Thus
  \begin{align*}
    \lim_n \|g_*\lambda * \mu^n - \lambda * \mu^n\| = 2\lim_n d_n = 0.
  \end{align*}
\end{proof}

A sequence of probability measures $\{\zeta_n\}$ on a $G$-space is
{\em almost-invariant} if
\begin{align*}
  \lim_n\|g_*\zeta_n - \zeta_n\| = 0
\end{align*}
for all $g \in G$.

\begin{corollary}
  \label{cor:t-inv-cosg}
  In the setting of Theorem~\ref{thm:co-amenable-irss}, there exist
  $N$-almost invariant probability measures $\{\zeta_n\}$ on $\cosg$
  such that $\pi_{l*} (g_* \zeta_n) = \pi_{r*} (g_* \zeta_n) =
  \lambda$ for all $n \in \N$ and $g \in G$.
\end{corollary}
\begin{proof}
  Let $\zeta_n = \lambda * \mu^n$, and let
  $(B,\nu) = \bow{\mu}{\lambda}$.  Since $\nu$ is invariant to the
  $N$-action, it follows from Lemma~\ref{lem:invariant-finitary} that
  for every $h \in N$
  \begin{align*}
    \lim_n\|h_*\zeta_n - \zeta_n\| = 0,
  \end{align*}
  or, in other words, that the sequence $\{\zeta_n\}$ is $N$-almost
  invariant.

  Recall that $\pi_r(H g) = H$ and $\pi_l(g H) = H$ for every $g H, H
  g \in \cosg$. Since
  \begin{align*}
   \pi_{r*}(g_*\zeta_n) = \pi_{r*}(\lambda * (g_*\mu^n)) = \lambda, 
  \end{align*}
  we have that $\pi_{r*}\zeta_n = \lambda$. And since $\lambda$ is
  conjugation invariant, we have that $\lambda * (g_*\mu^n) =
  (g_*\mu^n) * \lambda$, from which it follows that $\pi_{l*}\zeta_n =
  \lambda$.
\end{proof}

In order to prove that $\lambda$ is co-amenable in $N$, we will
demonstrate the existence of $N$-almost invariant measures on $N / H$,
for $\lambda$-almost every $H$. These will be constructed as
disintegrations of the $N$-almost invariant measures $\{\zeta_n\}$ on
$\cosg$ from Corollary~\ref{cor:t-inv-cosg}. Our first task will be to
push the measures $\{\zeta_n\}$ forward to $N$-almost invariant
measures on $\cost$.

To this end, let $\rep \colon G/N \to G$ be a measurable map which
satisfies $\rep(g N) \in g N$; this is a section of the quotient map
$g \mapsto g N$, or a map that chooses a representative for each coset
of $N$ in $G$ in a measurable fashion. The existence of such a map was
shown by Mackey~\cite[Lemma 1.1]{mackey1952induced}. Let
$\rho \colon G \to N$ be given by
\begin{align*}
  \rho(g) = g \cdot \rep\left(g^{-1} N\right).
\end{align*}
Intuitively, $\rho(g)$ is the ``difference'' between $g$ and the
representative of its $N$-coset.  Note that $\rho$ is measurable and
equivariant to the left $N$-action: for all $g \in G$ and $k \in N$ it
holds that $\rho(k g) = k\rho(g)$.

Let $\cosgn$ be the restriction of $\cosg$ to $G$-cosets of subgroups
of $N$:
\begin{align*}
  \cosgn = \{H g \in \cosg \,:\,H \in \subt\}.
\end{align*}
This closed subspace is equipped with a left $N$-action: for $k \in N$
and $H g \in \cosgn$, $k(H g) = H^k k g \in \cosgn$. This is simply the
restriction of the $G$-action on $\cosg$.

For $H g \in \cosgn$ it follows from the $N$-equivariance of $\rho$
that $\rho(H g) = H \rho(g) \in \cost$. Hence we can extend $\rho$ to
a map $\rho \colon \cosgn \to \cost$. It is easy to check that this
map too is equivariant with respect to the left $N$-action.

Now, $\pi_{r*} (g_* \zeta_n) = \lambda$ by
Corollary~\ref{cor:t-inv-cosg}, and so our measures $\zeta_n$ are
supported on $\cosgn$. We can thus use $\rho$ to push-forward our
$N$-almost invariant measures $\zeta_n$ on $\cosgn$ to $N$-almost
invariant measures on $\cost$: $\alpha_n = \rho_*\zeta_n$. By again
invoking the $N$-equivariance of $\rho$, it also follows that the
projections $\pi_{r*} \alpha_n$ and $\pi_{l*} \alpha_n$ are equal to
$\lambda$ (now as a measure on $\subt$). We have thus proved the
following claim.
\begin{claim}
  \label{lem:t-invariant-on-tlam}
  There exist $N$-almost invariant probability measures $\alpha_n$ on
  $\cost$ such that $\pi_{l*} (h_* \alpha_n) = \pi_{r*} (h_* \alpha_n) =
  \lambda$ for all $n \in \N$ and $h \in N$.
\end{claim}

We are now ready to take the last step in the proof of
Theorem~\ref{thm:co-amenable-ift}.
\begin{claim}
  \label{clm:co-amenable-in-t}
  $\lambda$ is co-amenable in $N$.
\end{claim}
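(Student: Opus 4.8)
The goal of Claim~\ref{clm:co-amenable-in-t} is to produce, for $\lambda$-almost every $\fk \in \supp\lambda$, a $\T$-left-invariant mean on $\tmodk = \modr{\T}{\fk}$, which is exactly the assertion that $K$ is co-amenable in $\T$. The raw material is the sequence $\theta_n$ of measures on $\lambdat$ furnished by Claim~\ref{lem:t-invariant-on-tlam}, which is asymptotically $\T$-left-invariant in total variation and whose push-forward under $\phi$ is $\lambda$ (even after translating by any $s \in \T$). The plan is to disintegrate $\theta_n$ over the base $(\subg,\lambda)$ along the fibration $\phi:\lambdat \to \subg$, and then apply a limiting/averaging argument fiberwise.

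First I would use the relation $\phi_* s\theta_n = \lambda$ for all $s \in \T$ to disintegrate each $\theta_n$ as $d\theta_n(\fk,\,\cdot\,) = d\theta_n^{\fk}(\cdot)\,d\lambda(\fk)$, where $\theta_n^{\fk}$ is a probability measure supported on the fiber $\phi^{-1}(\fk)$, which is naturally identified with the coset space $\tmodk$. The key point is to translate the \emph{global} asymptotic invariance $|t\theta_n - \theta_n| \to 0$ into a \emph{fiberwise} statement. Because $t \in \T$ acts on $\lambdat$ preserving the base point $\fk$ (as $\T$ is normal and $\phi$ records the conjugacy-type of the subgroup, which the left $\T$-action does not change when restricted appropriately), integrating the inequality
\begin{align*}
  |t\theta_n - \theta_n| = \int_{\subg} |t\theta_n^{\fk} - \theta_n^{\fk}|\,d\lambda(\fk)
\end{align*}
shows that for each fixed $t \in \T$ the fiberwise total variation $|t\theta_n^{\fk} - \theta_n^{\fk}|$ tends to $0$ in $L^1(\lambda)$, hence along a subsequence $\lambda$-almost surely. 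I expect the main obstacle to lie precisely here: justifying that $\phi$ is genuinely $\T$-equivariant over the base (so that $t$ permutes fibers trivially), and handling the interchange of the subsequence extraction with the quantifier over the uncountably many $t \in \T$ — this is where I would invoke separability of $\T$ (it is lcsc, hence second countable) to pass to a countable dense subgroup, extract a single subsequence giving almost-sure fiberwise asymptotic invariance along that dense set, and then upgrade to all of $\T$ by an approximation/continuity argument in total variation.

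Having obtained, for $\lambda$-almost every $\fk$, a sequence $\theta_n^{\fk}$ of probability measures on $\tmodk$ that is asymptotically invariant under a dense subgroup of $\T$, I would then take a weak-$*$ limit point of the associated means in $\ell^\infty(\tmodk)^*$ (or equivalently a Banach limit of the sequence $\theta_n^{\fk}$ viewed as elements of the dual of a suitable space of bounded functions). The asymptotic invariance guarantees that any such weak-$*$ limit point is a genuine $\T$-invariant mean on $\tmodk$: for each $t$ in the dense subgroup the defect $|t\theta_n^{\fk} - \theta_n^{\fk}| \to 0$ forces the limiting mean to be $t$-invariant, and density together with the fact that a mean is automatically continuous for the relevant topology extends invariance to all $t \in \T$. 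This produces a $\T$-invariant mean on $\modr{\T}{\fk}$ for $\lambda$-almost every $\fk$, which is the definition of $K$ being co-amenable in $\T$, completing the proof.

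Summarizing the order of steps: (1) disintegrate $\theta_n$ over the base via $\phi_* s\theta_n = \lambda$; (2) convert global into fiberwise asymptotic $\T$-invariance by integrating the total-variation bound, using equivariance of $\phi$; (3) reduce to a countable dense subgroup of $\T$ and extract an almost-sure subsequence; (4) take a weak-$*$ limit of the fiberwise means to obtain, almost surely, a $\T$-invariant mean on $\tmodk$, and conclude co-amenability. The delicate step is (2)–(3), the passage from a single global total-variation estimate to almost-everywhere fiberwise invariance simultaneously for all group elements.
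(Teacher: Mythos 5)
Your proposal follows essentially the same route as the paper's proof: disintegrate $\theta_n$ over the base $(\subg,\lambda)$ along $\phi$, use $(t\theta_n)^\fk = t\theta_n^\fk$ to convert the global total-variation estimate into fiberwise asymptotic invariance, and conclude the existence of a $\T$-invariant mean on $\tmodk$ for $\lambda$-almost every $\fk$. In fact you are somewhat more careful than the paper, which passes directly from $\int_{\subg}|t\theta_n^\fk - \theta_n^\fk|\,d\lambda(\fk) \to 0$ to almost-everywhere convergence and then cites Greenleaf, whereas you explicitly handle the subsequence extraction and the null sets depending on $t$ via a countable dense subgroup of $\T$.
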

\begin{proof}
  Let $\alpha_n$ be a sequence of probability measures on $\cost$
  given by Claim~\ref{lem:t-invariant-on-tlam}.  Recall that $\pi_l
  \colon \cost \to \subt$ is given by $\pi_l(t H) = H$, and that
  $\pi_{l*}(t_*\alpha_n) = \lambda$ for all $t \in N$. We fix $t \in
  N$, and disintegrate both $\alpha_n$ and $t_*\alpha_n$ with respect
  to $\pi_l$:
  \begin{align*}
    \alpha_n &= \int_{\subt}\alpha_n^H \dd\lambda(H)\\
    t_*\alpha_n &= \int_{\subt}(t_*\alpha_n)^H \dd\lambda(H),
  \end{align*}
  so that $\alpha_n^H$ and $(t_*\alpha_n)^H$ are measures on $N/H$.
  Note that for every $k,k' \in N$ it holds that
  $\pi_l(k k' H) = \pi_l(k' H) = H$.  Hence both $\alpha_n^H$ and
  $t_* (\alpha_n^H)$ are supported on the same fiber (namely $N / H$),
  and we get that $(t_*\alpha_n)^H = t_*(\alpha_n^H)$.

  As both $t_*\alpha_n$ and $\alpha_n$ are projected by $\pi_l$ to
  $\lambda$, we can disintegrate $t_*\alpha_n -\alpha_n$ to get
  \begin{align*}
    t_*\alpha_n-\alpha_n &= \int_{\subt}\left((t_*\alpha_n)^H-\alpha_n^H\right)\dd\lambda(H) \\
    &= \int_{\subt} \left(t_*(\alpha_n^H)-\alpha_n^H\right) \dd\lambda(H),
  \end{align*}
  and in particular, 
  \begin{align*}
    \|t_*\alpha_n-\alpha_n\| =
    \int_{\subg}\left\|t_*(\alpha_n^H)-\alpha_n^H\right\|\dd\lambda(H).
  \end{align*}
  So for any $t\in N$ we have that $\|t_*\alpha_n-\alpha_n\| \to 0$,
  and therefore $\left\|t_*(\alpha_n^H)-\alpha_n^H\right\| \to 0$ for
  $\lambda$-almost every $H$. Finally, the existence of asymptotically
  invariant measures on $N/H$ implies that $H$ is co-amenable in $N$
  (see~\cite{eymard1972moyennes} or~\cite[Theorem
  4.18]{shalom1999invariant}).
\end{proof}

This completes the proof of Theorem~\ref{thm:co-amenable-irss}, except
for the last statement. To see that it holds, assume that the Furstenberg-Poisson
boundary of the $\mu$-random walk on $G/K$ is almost surely
trivial. We now invoke Bowen's definition of the Poisson bundle
$\bow{\mu}{\lambda}$ as a bundle over $(\subg,\lambda)$, where the
fiber over $H < G$ is the Furstenberg-Poisson boundary of the $\mu$-random walk on
$H \backslash G$ (see~\cite{bowen2010random}, the discussion in the
penultimate paragraph of Section~\ref{sec:bowenspaces}, as well
as~\eqref{eq:bowen-space-decompo} and the preceding discussion). Since
these fibers are all trivial, it follows that $\bow{\mu}{\lambda}$ is
isomorphic, as a $G$-space, to $(\subg,\lambda)$. In particular the
$G$-action on the $\bow{\mu}{\lambda}$ is measure preserving. Hence,
by the argument above, $K$ is co-amenable in $G$.

\section{Intermediate factor theorems}
Our main results, Theorems~\ref{thm:product-irs-rigidity}
and~\ref{thm:lie-irs-rigidity}, are consequences of
Theorem~\ref{thm:co-amenable-ift} below, which is a statement
regarding any IRS that satisfies an {\em intermediate factor theorem}
(IFT).  The original proofs of Stuck-Zimmer and of Bader-Shalom are
also each based on a corresponding IFT. We start this section by
defining intermediate factors.
 
Let $\pb{G,\mu}$ be the Furstenberg-Poisson boundary of a group $G$ with a
Haar-equivalent probability measure $\mu$, and let
$G \curvearrowright (X,\im)$ be a pmp action. A $G$-quasi-invariant
probability space $(Y,\eta)$ is a $(G,\mu)$-{\em intermediate factor
  over $(X,\im)$} if there exist $G$-factors $\kappa$ and $\pi$
\begin{align*}
\xymatrix{  \pb{G,\mu} \times (X,\im) \ar[d]^{\kappa}\\  (Y,\eta)
  \ar[d]^{\pi} \\ (X,\im)}
\end{align*}
such that the composition $\pi \circ \kappa$ is the natural projection
$(p,x) \mapsto x$. Note that, if we denote by $\nu$ the measure of the
Furstenberg-Poisson boundary $\pb{G,\mu}$, then $\kappa_*(\nu \times \im) =
\eta$.
$\pi_*\eta = \im$ and $[\pi \circ \kappa]_*(\nu \times \im) = \im$.

A trivial example of an intermediate factor is
\begin{align*}
  \xymatrix{\pb{G,\mu} \times (X,\im) \ar[d]^{\kappa_0\times
    \id}\\ (Z,\xi) \times (X,\im)
  \ar[d]^{\pr}\\ (X,\im)}
\end{align*}
where $(Z,\xi)$ is any $G$-factor of the Furstenberg-Poisson boundary. $G$-factors
of the Furstenberg-Poisson boundary $\pb{G,\mu}$ are also called $(G,\mu)$-{\em
  boundaries}, or $\mu$-proximal actions.

Zimmer~\cite{zimmer1982ergodic} proves an {\em intermediate factor
  theorem}, which was generalized (and had its proof corrected) by
Nevo and Zimmer~\cite{nevo2002generalization}.  It provides conditions
on $G$ and $(X,\im)$ under which every intermediate factor is
isomorphic to a product $(Z,\xi) \times (X,\im)$.  Bader and
Shalom~\cite{bader2006factor} prove the same result for intermediate
factors over irreducible actions of product groups. To state these
theorems we first define an {\em IFT action}.
\begin{definition}
  A pmp $G$-action $G \curvearrowright (X,\im)$ is an {\em IFT action}
  if there exists a Haar-equivalent $\mu$ such that for every
  $(G,\mu)$-intermediate factor
  \begin{align*}
    \xymatrix{  \pb{G,\mu} \times (X,\im) \ar[d]^{\kappa}\\  (Y,\eta)
      \ar[d]^{\pi} \\ (X,\im)}
  \end{align*}
  there exists a $G$-factor $(Z,\xi)$ of the Furstenberg-Poisson boundary and
  $G$-isomorphism $\varphi \colon (Y,\eta) \to (Z,\xi)\times(X,\im)$ such
  that the following diagram commutes.
  \begin{align*}
    \xymatrix{  \pb{G,\mu} \times (X,\im)
      \ar[d]^{\kappa}
      \ar[dr]^{\kappa_0 \times \id}&\\
      (Y,\eta)
      \ar[d]^{\pi} \ar[r]^{\varphi}& (Z,\xi)\times(X,\im)
      \ar[dl]^{\pr} \\
      (X,\im)&}
  \end{align*}  
\end{definition}

\begin{theorem}[Intermediate Factor Theorem~\cite{zimmer1982ergodic,
    bader2006factor}]
  \label{thm:ift}
  Let $G$ be either (1) a product of lcsc groups, or (2) a connected
  semi-simple Lie group with finite center, no compact factors and
  higher rank. Then every irreducible pmp $G$-space is an IFT
  action.
\end{theorem}
Recently, Levit~\cite{arie2014nevo} proved an intermediate factor
theorem over local fields.

\subsection{Poisson bundles as intermediate factors}
The next claim shows that every Poisson bundle $\bow{\mu}{\lambda}$ is an
intermediate factor. This will allow us to apply intermediate factor
theorems to Poisson bundles.
\begin{proposition}
  \label{prop:bowen-space-if}
  Let $\lambda \in \irsg$. Then $\bow{\mu}{\lambda}$ is an
  intermediate factor. Namely, there exist $G$-maps
  \begin{align}
    \label{eq:bowen-intermediate2}
    \xymatrix { \pb{G,\mu} \times (\subg,\lambda)
      \ar[d]^{\hat\kappa}\\  \bow{\mu}{\lambda}
      \ar[d]^{\hat\pi_r} \\ (\subg, \lambda)}
  \end{align}
  such that $\hat\pi_r \circ \hat\kappa$ is the projection
  $(b, H) \mapsto H$.
\end{proposition}

\begin{proof}
  Recall that $\pi_r \colon \cosg \to \subg$ is given by
  $\pi_r(Hg)=H$, and that it commutes with the left $G$-action:
  $\pi_r(k H g) = \pi_r(Hg)^k$. We extend its definition to a $G$-map
  $\pi_r \colon \cosg^\N \to \subg$ by
  \begin{align*}
    \pi_r(c_1,c_2,\ldots) = \pi_r(c_1).
  \end{align*}
  Recall that $\kappa \colon G^\N \times \subg \to \cosg^\N$ is given
  by $\kappa((g_1,g_2,\ldots), H) = (H g_1, H g_2, \ldots)$.  Hence
  the composition $\pi_r \circ \kappa$
  (see~\eqref{eq:bowen-intermediate} below) is equal to the projection
  $((g_1,g_2,\ldots),H) \mapsto H$.
  \begin{align}
    \label{eq:bowen-intermediate}
    \xymatrix { (G^\N \times \subg,  \mathbb{P}_\mu \times \lambda)
      \ar[d]^{\kappa}\\  (\cosg^\N,\mathbb{P}^\lambda_\mu)
      \ar[d]^{\pi_r} \\ (\subg, \lambda)}
  \end{align}
  
  Define the shift-action on $G^\N \times \subg$ in the obvious way.
  Then the shift commutes with $\kappa$, and so the shift-invariant
  sigma-algebra of $(\cosg^\N,\mathbb{P}^\lambda_\mu)$ is a
  sub-sigma-algebra of the shift-invariant sigma-algebra of
  $(G^\N \times \subg, \mathbb{P}_\mu \times \lambda)$. Hence
  $\kappa$, as a factor between the two shift-invariant
  sigma-algebras, extends to a factor $\hat\kappa$ between the Mackey
  realizations of these two sigma-algebras, which are, by definition,
  $\pb{G,\mu} \times (\subg,\lambda)$ and $\bow{\mu}{\lambda}$.
  
  As we note above, $\mathbb{P}^\lambda_\mu$ is supported on elements
  of the form $(H g_1, H g_2,\ldots)$. Since
  $\pi_r(H g_1, H g_2,\ldots) = H$, $\pi_r$ is in fact measurable in
  the shift-invariant sigma-algebra of
  $(\cosg^\N, \mathbb{P}_\mu^\lambda)$. It follows that, as with
  $\kappa$ above, $\pi_r$ can be extended to a $G$-map $\hat\pi_r$ of
  the corresponding Mackey realizations; namely to a $G$-map from
  $\bow{\mu}{\lambda}$ to $(\subg,\lambda)$. Thus $\hat\kappa$ and
  $\hat\pi_r$ act as in~\eqref{eq:bowen-intermediate2}.
  
  Finally, since $\pi_r \circ \kappa$ is the projection on the
  second coordinate, then so is $\hat\pi_r \circ \hat\kappa$, and we
  have proved the claim.
\end{proof}

Having shown that every Poisson bundle is an intermediate factor, we
explore the consequences of the application of an intermediate factor
theorem to one. In particular, when
$G \curvearrowright (\subg, \lambda)$ is an IFT action,
$\bow{\mu}{\lambda}$ is isomorphic to the product
$(\subg,\lambda) \times (Z,\xi)$, where $(Z,\xi)$ is some
$(G,\mu)$-boundary. As the next proposition shows, this means that
$(Z,\xi)$ is invariant to every element of the normal closure of
$\lambda$.
\begin{proposition}
  \label{prop:bowen-ift}
  Let $\lambda \in \irsg$ be such that $G \curvearrowright (\subg,
  \lambda)$ is an IFT action. Then $\bow{\mu}{\lambda} = (Z,\xi)
  \times (\subg,\lambda)$, where $(Z,\xi)$ is $\nc{\lambda}$-invariant.
\end{proposition}

Before proving this proposition we will need to introduce some
additional notation. A natural decomposition of the Poisson bundle
$\bow{\mu}{\lambda}$ is by disintegration according to the factor
$\hat\pi_r \colon \bow{\mu}{\lambda} \to (\subg,\lambda)$ defined in
the proof of Proposition~\ref{prop:bowen-space-if} above.  We denote
the fiber $\hat\pi_r^{-1}(H)$ by $(B_H,\nu_H)$; this space can be shown to
be the Furstenberg-Poisson boundary of the $\mu$-random walk on
$H \backslash G$~\cite{bowen2010random}. Hence we can write
\begin{align}
  \label{eq:bowen-space-decompo}
  \bow{\mu}{\lambda} = \{(b, H)\,:\, H \in \subg, b \in B_{H}\},
\end{align}
where the measure is not displayed explicitly.

\begin{proof}[Proof of Proposition~\ref{prop:bowen-ift}]
  By Proposition~\ref{prop:bowen-space-if}, $\bow{\mu}{\lambda}$ is
  indeed an intermediate factor. Hence there exists a $G$-isomorphism
  $\varphi \colon \bow{\mu}{\lambda} \to (Z,\xi) \times
  (\subg,\lambda)$, where $(Z,\xi)$ is a
  $(G,\mu)$-boundary, and such that the following diagram commutes.
  \begin{align*}
    \xymatrix{ \pb{G,\mu} \times (\subg,\lambda) \ar[d]^{\hat\kappa}
      \ar[dr]^{\kappa_0 \times \id}& \\
      \bow{\mu}{\lambda} \ar[d]^{\hat\pi_r}
      \ar[r]^{\varphi}& (Z,\xi)\times(\subg,\lambda) \ar[dl]^{\pr} \\
      (\subg,\lambda)&}
  \end{align*}
  It follows that $(B_H,\nu_H)$ (the $\hat\pi_r$-fiber above $H$) is
  isomorphic to $(Z,\xi)$ (the $\pr$-fiber above $H$), for
  $\lambda$-almost every $H \in \subg$.

  Note that for $h \in H$,
  \begin{align}
    \label{eq:h-invariant}
    h\kappa((g_1,g_2,\ldots),H) &= h(H g_1, H g_2, \ldots)\\
    &= (H^h h g_1, H^h h g_2, \ldots) \nonumber \\
    &= \kappa((g_1,g_2,\ldots),H). \nonumber
  \end{align}
  We extend $\kappa$ (as in the proof of
  Proposition~\ref{prop:bowen-space-if}) but this time to a factor
  $\tilde \kappa$ from $G^\N \times \subg$ to
  $\bow{\mu}{\lambda}$. Then $\tilde\kappa((g_1,g_2,\ldots),H) = (b,H)$ for
  some $b \in B_{H}$. This follows from the fact that
  $\hat\pi_r(b,H) = H$, and that $\pi_r \circ \kappa$ is the
  projection. It then further follows from~\eqref{eq:h-invariant} that
  $h(b,H) = (b,H)$, for all $h \in H$; the invariance of
  $(H g_1, H g_2,\ldots)$ to $H$ extends from
  $(\cosg^\N,\mathbb{P}^\lambda_\mu)$ to its shift-invariant
  sub-sigma-algebra and its Mackey realization.

  Since, as we noted above, $(B_H,\nu_H)$ is isomorphic to $(Z,\xi)$,
  it follows that $h(z,H) = (z,H)$ for all $\lambda$-almost every
  $h \in H$ and $\xi$-almost every $z \in Z$. But the $G$-action on
  $(Z,\xi)\times(\subg,\lambda)$ is the diagonal action, and so we
  have that $(Z,\xi)$ is $H$-invariant for $\lambda$-almost every
  $H \in \subg$.

  By a standard argument~\cite{varadarajan1963groups}, we can choose a
  compact model for $(Z,\xi)$ such that $G \curvearrowright (Z,\xi)$
  is continuous, and deduce that $(Z,\xi)$ is $H$-invariant for every
  $H$ in the support of $\lambda$, and is therefore
  $\nc{\lambda}$-invariant.
  
\end{proof}

\subsection{IFT actions and co-amenable IRSs}

Given the connection between intermediate factors and Poisson bundles,
and given Theorem~\ref{thm:co-amenable-irss}, we are now ready to show
how intermediate factor theorems can be used to show that an IRS is
co-amenable.
\begin{theorem}
  \label{thm:co-amenable-ift}
  Let $G$ be an lcsc group. Let $K \leq G$
  be an IRS with distribution $\lambda$ such that $G\curvearrowright
  (\subg,\lambda)$ is an IFT action. Then $K$ is co-amenable in
  $\nc{\lambda}$, the normal closure of $\lambda$.
\end{theorem}
\begin{proof}
  By Proposition~\ref{prop:bowen-ift}, if $G\curvearrowright
  (\subg,\lambda)$ is an IFT action then the action of $\nc{\lambda}$
  on the Poisson bundle $\bow{\mu}{\lambda}$ is measure
  preserving. Applying Theorem~\ref{thm:co-amenable-irss} yields the
  desired result.
\end{proof}

\section{Proofs of main theorems and corollaries}
\label{sec:proof-of-corollaries}



\begin{proof}[Proof of Theorem~\ref{thm:product-irs-rigidity}]
  Let $\lambda \in \irsg$ be the distribution of $K$, and denote $N =
  \nc{\lambda}$.  By the Bader-Shalom IFT (Theorem~\ref{thm:ift}),
  Theorem~\ref{thm:co-amenable-ift} implies that $K$ is co-amenable in
  $N$.

  Denote by $N_i \lhd G_i$ the closure of the projection of
  $N$ on $G_i$. By Fact~\ref{fact:normal}, $N$
  is co-abelian in $N_1 \times N_2$, and hence in particular
  co-amenable.  As $K$ is co-amenable in $N$, we conclude
  that $K$ is co-amenable in $N_1\times N_2$.
\end{proof}

\begin{proof}[Proof of Theorem~\ref{thm:lie-irs-rigidity}]
  Let $\lambda \in \irsg$ be the distribution of $K$, and denote
  $N = \nc{\lambda}$.  By the Nevo-Zimmer IFT (Theorem~\ref{thm:ift}),
  Theorem~\ref{thm:co-amenable-ift} implies that $K$ is co-amenable in
  $N$. Hence if $N = G$ then $K$ is co-amenable in $G$, and we are
  done.

  Otherwise $N \neq G$. Then there exists a non-central, closed normal
  $M \lhd G$ such that $N$ and $M$
  commute~\cite{ragozin1972normal}. By the irreducibility of the IRS
  $M$ acts ergodically on $(\subg,\lambda)$. But since $N$ and $M$
  commute, and since $K$ is contained in $N$, this action is
  trivial. Hence $K$ must equal a closed normal subgroup.
\end{proof}

\begin{proof}[Proof of Corollary~\ref{thm:just-non-amenable}]
  Let $K$ be an irreducible IRS of $G$. By
  Theorem~\ref{thm:product-irs-rigidity}, there exist subgroups
  $N_1 \lhd G_1$ and $N_2 \lhd G_2$ such that $K$ is co-amenable in
  $N = N_1 \times N_2$. Since $K$ is irreducible, if either $N_1$ or
  $N_2$ is trivial then $K$ must equal $N_1 \times N_2$. Otherwise,
  because $G_1$ and $G_2$ are just non-amenable, $N_1$ is co-amenable
  in $G_1$ and $N_2$ is co-amenable in $G_2$. Hence $N$ is co-amenable
  in $G$, and so $K$ is co-amenable in $G$.
\end{proof}

To prove Corollary~\ref{thm:discrete} we will need the following two
elementary lemmas. The second one,
Lemma~\ref{lem:G1-essentially-transitive}, is a variation
on~\cite[Lemma 4.2]{bader2006factor} and~\cite[Lemma
1.8]{stuck1994stabilizers}.
\begin{lemma}
  \label{lem:act-transitively}
  Let $H$ be a subgroup of $G_1 \times G_2$. If the projection of $H$
  to the second coordinate is equal to $G_2$, then the action of $G_1$
  on $G/H$ is transitive.
\end{lemma}
\begin{proof}
  The projection of $H$ to the second coordinate is equal to $G_2$;
  that is, for every $g_2 \in G_2$ there exists an $g_1 \in G_1$ such
  that $g_1 g_2 \in H$. Fix any $g_1 g_2 H \in G/H$. Then there exists
  a $g_1' \in G_1$ such that $g_1' g_2^{-1} \in H$, and
  \begin{align*}
    g_1 g_1' H = g_1 g_1' (g_2 g_2^{-1})  H = g_1  g_2  (g_1' g_2^{-1})
    H = g_1 g_2 H,
  \end{align*}
  where the second equality uses the commutativity of $G_1$ and $G_2$,
  and the last equality follows from the fact that
  $g_1' g_2^{-1} \in H$.  Hence $G_1$ indeed acts transitively on
  $G / H$.
\end{proof}
\begin{lemma}
  \label{lem:G1-essentially-transitive}
  Let $G = G_1 \times G_2$ be lcsc with $G_1$ simple, and let
  $G \curvearrowright (X,\im)$ be a non-trivial, irreducible pmp
  action. Then the action $G_1 \curvearrowright (X,\im)$ is
  essentially free.
\end{lemma}
\begin{proof}
  Consider the map $s \colon X \to \sub{G_1}$ which assigns to each
  $x \in X$ its $G_1$-stabilizer
  $s(x) = \{g_1 \in G_1\,:\,g_1 x = x\}$.  This map is easily seen to
  be $G_2$-invariant, by the fact that $G_1$ and $G_2$
  commute. Likewise, it is $G_1$-equivariant, and so, by ergodicity,
  $s(x)$ is $\im$-almost surely equal to some closed normal
  $N_1 \lhd G_1$.
  
  Now, if $N_1 = G_1$ then the action $G_1 \curvearrowright (X,\im)$
  is trivial, and by irreducibility $G \curvearrowright (X,\im)$ is
  trivial, in contradiction to the claim hypothesis. Otherwise, since
  $G_1$ is simple, $N_1$ is trivial, and so the action
  $G_1 \curvearrowright (X,\im)$ is essentially free.
\end{proof}

\begin{proof}[Proof of Corollary~\ref{thm:discrete}]
  Let $G = G_1 \times G_2$, let $G \curvearrowright (X,\im)$ be an
  irreducible pmp action, and let $\lambda$ be the distribution of the
  IRS $K$ associated with $(X,\im)$.  By
  Corollary~\ref{thm:just-non-amenable}, $K$ is either equal to a
  normal subgroup, or $K$ is co-amenable in $G$. In the former case,
  since $G_1$ and $G_2$ are simple, $K$ is either trivial, in which
  case the action is essentially free, or else $K$ is one of
  $\{G_1,G_2,G\}$, and hence by irreducibility the action is
  trivial. We therefore assume henceforth that $K \neq G$ is
  co-amenable in $G$, and show that this leads to a contradiction,
  unless the action is trivial or free.

  Let $\pr_2 \colon \subg \to \sub{G_2}$ be the map that assigns to
  each $H \in \subg$ its projection on the second coordinate. Note
  that $G_2$ is discrete, and therefore $\pr_2 H$ is closed, since
  every subgroup of $G_2$ is closed. This map is easily seen to be
  $G_1$-invariant and $G_2$-equivariant, and therefore $\pr_2 K$ is
  almost surely equal to some normal $M_2 \lhd G_2$, by
  the ergodicity of the $G_2$-action\footnote{We would like to thank
    Jesse Peterson for suggesting this argument.}.

  Assume first that $M_2$ is the trivial subgroup. Then $K$ is
  contained in $G_1$, and so, by the ergodicity of the $G_2$-action on
  $(\subg,\lambda)$, $K$ must almost surely equal some closed normal
  $M_1 \lhd G_1$. As above, if $M_1=G_1$ then the action is
  trivial. Otherwise $M_1$ is trivial, and we have that $K$ is
  trivial, so that the action $G \curvearrowright (X,\im)$ is
  essentially free.
  
  Finally, consider the case that $M_2$ is not the trivial
  subgroup. Since $G_2$ is simple, $M_2 = G_2$, and the projection of
  $K$ on the second coordinate is $G_2$. Therefore, by
  Lemma~\ref{lem:act-transitively}, $G_1$ acts transitively on $G/K$.
  By Lemma~\ref{lem:G1-essentially-transitive}, $G_1$ acts essentially
  freely on $G/K$. Hence the action $G_1 \curvearrowright G/K$ is
  isomorphic to the natural action $G_1 \curvearrowright G_1$. Since
  $K$ is almost surely co-amenable in $G$, there is a $G$-invariant
  mean on $G/K$, which in particular is $G_1$-invariant. Therefore,
  and since we identified $G/K$ with $G_1$, there exists a
  $G_1$-invariant mean on $G_1$. So $G_1$ is amenable - contradiction.
\end{proof}

In order to prove Corollaries~\ref{thm:rigidity-special}
and~\ref{thm:stuck-zimmer} we will use (and elaborate on)
Varadarajan's ergodic decomposition
theorem~\cite{varadarajan1963groups} to show that the existence of
invariant probability measures supported on orbits implies that an
action is essentially transitive. We state Varadarajan's theorem after
the following definition.
\begin{definition}[Varadarajan~\cite{varadarajan1963groups}]
  Let $G$ be an lcsc group acting on a standard measurable space $X$.
  Let $E(X)^G$ denote the space of $G$-invariant, ergodic probability
  measures on $X$. A {\em decomposition map} is a measurable map
  $\beta : X \to E(X)^G$ with the following properties.
  \begin{enumerate}
  \item $\beta$ is $G$-invariant. I.e., $\beta_{g x} = \beta_x$ for all
    $g \in G$ and $x \in X$.
  \item For every $\eta \in E(X)^G$, it holds that
    $\eta(\beta^{-1}(\eta))=1$.
  \item For every $G$-invariant measure $\theta$ it holds that
    \begin{align}
      \label{eq:decompo}
      \theta = \int_X\beta_x\dd\theta(x).
    \end{align}
  \end{enumerate}
\end{definition}
\begin{theorem}[Varadarajan~\cite{varadarajan1963groups}]
  \label{thm:varadarajan}
  For every action of an lcsc group $G$ on a standard measurable space
  $X$, there exists a decomposition map $\beta$. Furthermore, $\beta$
  is essentially unique, in the sense that if $\beta$ and $\beta'$ are
  decomposition maps then $\theta (\{x \in X\,:\, \beta_x \neq
  \beta'_x\})=0$ for any $G$-invariant probability measure $\theta$.
\end{theorem}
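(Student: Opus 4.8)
The plan is to realize the ergodic decomposition as a disintegration over the $\sigma$-algebra of $G$-invariant sets, constructed uniformly in $\theta$ by means of a fixed countable family of test functions, and to produce the convergence needed to define $\beta$ from a random-walk (Kakutani) ergodic theorem rather than from a Følner sequence, since $G$ need not be amenable. First I would reduce to the case where $X$ is a compact metric space and $G$ acts by homeomorphisms: any measurable action of an lcsc group on a standard measurable space is Borel isomorphic to a continuous action on a compact metrizable $G$-space, so no generality is lost. In this model $\Pro{X}$ is compact metrizable in the weak-$*$ topology and carries a standard Borel structure, the set of $G$-invariant measures is a compact convex set whose extreme points are exactly the members of $E(X)^G$, and $E(X)^G$ is a Borel subset of $\Pro{X}$. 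Fix a countable family $\{f_n\} \subseteq C(X)$ that is dense in the unit ball and separates measures, so that any Borel probability measure $\eta$ is determined by the moment sequence $\left(\int f_n\, d\eta\right)_n$.

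To construct $\beta$, fix an admissible $\mu$ on $G$ and consider the Markov operator $Pf(x) = \int_G f(gx)\,d\mu(g)$. Because $\supp\mu$ generates $G$, a $G$-invariant measure $\nu$ is $G$-ergodic if and only if it is $P$-ergodic, and the $P$-invariant bounded functions coincide $\nu$-almost everywhere with the $G$-invariant ones. Applying Birkhoff's theorem to the shift on the skew product $G^{\N}\times X$ (the Kakutani random ergodic theorem) gives, for each $n$, convergence of the averages $\tfrac1N\sum_{k=0}^{N-1} f_n(Z_k x)$ for $\mathbb{P}_\mu$-almost every trajectory and $\nu$-almost every $x$, with a $G$-invariant limit; integrating out the trajectory yields a genuinely $G$-invariant Borel function $\bar f_n$ defined on a $G$-invariant Borel set that is conull for every ergodic invariant measure. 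I then declare $x$ generic if $\left(\bar f_n(x)\right)_n$ is the moment sequence of some $\eta \in E(X)^G$, set $\beta_x=\eta$ there, and set $\beta_x=\eta_0$ for a fixed reference $\eta_0$ otherwise. Genericity is Borel and $\eta$ is recovered measurably from its moments, so $\beta\colon X\to E(X)^G$ is Borel; since $\bar f_n$ is $G$-invariant on a $G$-invariant set, $\beta_{gx}=\beta_x$ for all $g,x$, giving property (1).

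For property (2), the same convergence applied to a single ergodic $\eta$ shows $\bar f_n(x)=\int f_n\, d\eta$ for $\eta$-almost every $x$, simultaneously in $n$, so $\eta$-almost every $x$ is generic with $\beta_x=\eta$, that is $\eta(\beta^{-1}(\eta))=1$. For property (3), given a $G$-invariant $\theta$ I would disintegrate $\theta$ over the $\sigma$-algebra generated by $\{\bar f_n\}$; the conditional measures $\theta_x$ are $G$-invariant, and a zero-one argument on invariant sets shows they are ergodic for $\theta$-almost every $x$. Since $\int f_n\, d\theta_x=\bar f_n(x)$ and the $\{f_n\}$ separate measures, $\theta_x=\beta_x$ for $\theta$-almost every $x$, and integrating the disintegration identity yields $\theta=\int_X\beta_x\, d\theta(x)$. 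Essential uniqueness is immediate: any decomposition map $\beta'$ must satisfy $\int f_n\, d\beta'_x=\bar f_n(x)$ for $\theta$-almost every $x$ and every $n$ (forced by properties (1)--(3)), and separation of the $\{f_n\}$ then gives $\beta'_x=\beta_x$ off a $\theta$-null set.

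The main obstacle is producing the \emph{single} map $\beta$ for which (2) and (3) hold for all invariant measures at once, rather than disintegrating separately for each $\theta$; this is what forces the fixed countable family $\{f_n\}$ together with a pointwise convergence statement valid $\eta$-almost everywhere for every ergodic $\eta$ simultaneously. For general non-amenable $G$ the convergence cannot come from a Følner pointwise ergodic theorem, so the delicate point is extracting it from the martingale structure of the $\mu$-random walk and verifying that the resulting $P$-invariant conditional measures are genuinely $G$-invariant and $G$-ergodic; this identification, resting on the fact that $\supp\mu$ generates $G$, is the technical heart of the argument.
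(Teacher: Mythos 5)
The paper does not actually prove this statement: it is imported wholesale from Varadarajan's 1963 paper \cite{varadarajan1963groups}, so your attempt has to be judged on its own terms rather than against an internal argument. Your architecture is the standard one (compact model, countable separating family $\{f_n\}\subseteq C(X)$, a Markov operator $P$ attached to an admissible $\mu$, recovery of $\beta_x$ from moments, disintegration for property (3), separation of measures for uniqueness), and the parts concerning properties (2), (3) and essential uniqueness are plausible in outline. The genuine gap is property (1), everywhere $G$-equivariance, and it occurs exactly at the step you assert rather than prove. Integrating out the trajectory gives $\bar f_n(x)=\lim_N \frac1N\sum_{k=0}^{N-1}P^kf_n(x)$, and pointwise $P^kf_n(gx)-P^kf_n(x)=\int f_n(ux)\,d\bigl(\mu^{*k}*\delta_g-\mu^{*k}\bigr)(u)$, which is controlled only by $\|\mu^{*k}*\delta_g-\mu^{*k}\|$ in total variation. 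That quantity does \emph{not} tend to zero in general; it stays bounded away from zero whenever the random walk has a nontrivial Poisson boundary on which $g$ acts nontrivially --- which is precisely the situation for the product groups and higher-rank Lie groups this paper is about. (The Birkhoff limit on the skew product is shift-invariant, not $G$-invariant in the $X$-variable, so "integrating out the trajectory" does not create invariance either.) What the random ergodic theorem really gives you is that $\bar f_n$ is $G$-invariant $\nu$-almost everywhere \emph{for each invariant $\nu$ separately}, and that its domain of convergence is conull for each such $\nu$; neither the domain nor the function is pointwise invariant, so your "generic" set need not be $G$-invariant and $\beta_{gx}=\beta_x$ can fail at individual points.

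This cannot be patched by discarding a null set: "null" would have to mean null for every invariant measure simultaneously, the redefined map must still be Borel, and the set where full equivariance holds involves a quantifier over the uncountable group $G$, hence is a priori only coanalytic. Moreover the everywhere-equivariance is not a cosmetic feature of Varadarajan's statement: the paper uses it in Lemma~\ref{lem:co-finite-stabs}, where from $\beta_y=\eta_x$ at a single point $y=gx$ of a single orbit (an orbit that is typically null for $\im$) one concludes $\beta_x=\eta_x$; an almost-everywhere equivariant $\beta$ would not support that deduction. You concede at the end that this identification is "the technical heart of the argument", but that concession is the problem: the heart is exactly what is missing, and it is the content that distinguishes Varadarajan's theorem from the routine disintegration facts surrounding it. Two smaller points: the compact model theorem gives an equivariant Borel isomorphism onto an \emph{invariant Borel subset} of a compact metric $G$-space, not onto the whole space, so you must also check that $\beta_x(X)=1$ for generic $x$; and the almost-sure $G$-ergodicity of the conditional measures in your property (3) argument (your "zero-one argument") is itself a nontrivial step of the same flavor, not a remark.
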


\begin{lemma}
  \label{lem:co-finite-stabs}
  Let $G$ be an lcsc group, and let $G \curvearrowright (X,\im)$ be an
  ergodic pmp action. Assume that there exists a $G$-invariant
  probability measure on the orbit $G x$ for $\im$-almost every
  $x \in X$.  Then the action $G \curvearrowright (X,\im)$ is
  essentially transitive.
\end{lemma}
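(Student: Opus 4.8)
The plan is to apply Varadarajan's ergodic decomposition theorem (Theorem~\ref{thm:varadarajan}) and to show that, under the stated hypothesis, the ergodic components of the action are precisely the invariant probability measures supported on single orbits. First I would record the elementary fact that any $G$-invariant probability measure $\sigma$ supported on a single orbit $Gx$ is ergodic: a $G$-invariant measurable set meets the orbit in a $G$-invariant subset of $Gx$, which must be either $\sigma$-null or $\sigma$-conull since $Gx$ is a single orbit, so $\sigma(E)\in\{0,1\}$ for every invariant $E$. Consequently, for $\im$-almost every $x$ the hypothesized invariant probability measure $\sigma_x$ on the orbit $Gx$ belongs to $E(X)^G$.

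The key step is to identify $\beta_x$ with $\sigma_x$ for $\im$-almost every such $x$, and this can be done pointwise, without any measurable selection of the $\sigma_x$. Fixing a good $x$, property (2) of the decomposition map applied to $\sigma_x\in E(X)^G$ gives $\sigma_x(\beta^{-1}(\sigma_x))=1$, that is, $\beta_y=\sigma_x$ for $\sigma_x$-almost every $y$. Since $\sigma_x$ is concentrated on $Gx$, almost every such $y$ has the form $y=gx$, and the $G$-invariance of $\beta$ (property (1)) yields $\beta_y=\beta_{gx}=\beta_x$. Combining the two equalities gives $\beta_x=\sigma_x$; in particular the ergodic component $\beta_x$ is supported on the single orbit $Gx$.

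To conclude essential transitivity I would use the ergodicity of the ambient action, as holds in each application of the lemma (it is invoked on $N$-ergodic components). Then $\im$ is itself $G$-invariant and ergodic, so $\im\in E(X)^G$, and property (2) gives $\beta_x=\im$ for $\im$-almost every $x$. Together with the previous step this forces $\im=\beta_x$ to be supported on the single orbit $Gx$, which is exactly the assertion that the action is essentially transitive.

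I expect the main obstacle to be the final reduction from ``$\im$-almost every ergodic component is a single orbit'' to ``a single orbit carries full measure'': this is precisely where ergodicity of $\im$ enters, and without it the identification of the second paragraph only shows that $\im$-almost every ergodic component is transitive (indeed, a non-ergodic pmp action on a disjoint union of two finite orbits satisfies the hypotheses without being transitive). The earlier identification $\beta_x=\sigma_x$, by contrast, is clean once one is careful that property~(1) holds for \emph{all} $g$ and $x$, so that $\beta_y=\beta_x$ may be combined on a common conull set with the $\sigma_x$-almost-everywhere statement coming from property~(2).
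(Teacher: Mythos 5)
Your proof is correct and follows essentially the same route as the paper: Varadarajan's decomposition map, the identification $\beta_x=\sigma_x$ by combining property (2) of the decomposition map with the pointwise $G$-invariance of property (1), and ergodicity to finish. Two points where you diverge are worth recording. First, you explicitly verify that an invariant probability measure concentrated on a single orbit is automatically ergodic; the paper's proof silently strengthens the hypothesis, positing an invariant \emph{and ergodic} $\eta_x$ on almost every orbit when the lemma only supplies invariance, so your observation closes that small gap. Second, and more substantively, you are right that ergodicity of $\im$ is indispensable and is missing from the statement of the lemma: the paper's proof invokes ``the ergodicity of $\im$'' (via the integral decomposition of property (3) and indecomposability of ergodic measures) even though the lemma as written assumes only a pmp action, and your two-orbit example shows the statement is false without it. The omission is harmless in the paper, since the lemma is only ever applied to ergodic measures ($N$-ergodic components in Corollary~\ref{thm:rigidity}, and its $G_1$-ergodic analogue Lemma~\ref{lem:co-finite-stabs2} in the later corollaries), but it should be made explicit as you do. Your concluding step --- applying property (2) to $\im\in E(X)^G$ directly rather than decomposing $\im$ by property (3) --- is a marginally more direct variant of the paper's argument and equally valid.
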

\begin{proof}
  Let $\beta : X \to E(X)^G$ be a decomposition map of $X$ with
  respect to the $G$-action.  Let $x \in X$ be such that there exists
  a $G$-invariant and ergodic probability measure $\eta_x$ with
  $\eta_x(G x)=1$.
  
  By the second property of decomposition maps, there exists an
  element $y \in G x$ (in fact a $\eta_x$-full measure set of such
  elements) for which $\beta_y = \eta_x$. Since $\beta$ is
  $G$-invariant, we get that $\beta_x=\eta_x$ and in particular
  $\beta_x$ is supported on $G x$.

  Let $A$ be an $\im$-full measure set of $x \in X$ for which there
  exists a $G$-invariant, ergodic measure $\eta_x$ on $G x$, and for
  which, by the above, $\beta_x=\eta_x$ is supported on a
  $G$-orbit. Then
  \begin{align*}
    \im = \int_X\beta_x\dd\im(x) = \int_A\beta_x\dd\im(x),
  \end{align*}
  and it follows by the ergodicity of $\im$ that it is equal to some
  $\beta_x$. Hence $\im$ is supported on a $G$-orbit, or, equivalently,
  the action is essentially transitive.
\end{proof}

The following is a corollary of
Lemma~\ref{lem:co-finite-stabs}. Recall that $H$ is co-finite in $G$
if there exists a finite, $G$-invariant measure on $G/H$.
\begin{corollary}
  \label{cor:tame}
  Let $G$ be an lcsc group, and let an ergodic IRS $K$ in $G$ be
  co-finite. Then its distribution is supported on a single orbit
  $\{\fk^g\,:\, g\in G\}$.
\end{corollary}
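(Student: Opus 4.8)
The plan is to apply Lemma~\ref{lem:co-finite-stabs} directly to the conjugation action $G \curvearrowright (\subg,\lambda)$. Since $K$ is an IRS, $\lambda$ is a conjugation-invariant probability measure, and since $K$ is ergodic this is an ergodic pmp action on the standard space $\subg$ (with the Chabauty topology). The $G$-orbits in $\subg$ are precisely the conjugation orbits $\{\fk^g : g \in G\}$, and essential transitivity of this action is exactly the assertion that $\lambda$ is supported on a single such orbit. It therefore suffices to verify the hypothesis of Lemma~\ref{lem:co-finite-stabs}: that for $\lambda$-almost every $\fk$ there exists a $G$-invariant probability measure on its orbit.

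To this end I would first identify each orbit with a coset space. The stabilizer of $\fk \in \subg$ under conjugation is its normalizer $N_G(\fk) = \{g \in G : \fk^g = \fk\}$, which is a closed subgroup, so the orbit $\{\fk^g : g \in G\}$ is $G$-equivariantly isomorphic to $G/N_G(\fk)$. Hence a $G$-invariant probability measure on the orbit exists if and only if $N_G(\fk)$ is co-finite in $G$, i.e.\ $G/N_G(\fk)$ carries a finite invariant measure.

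The key observation is that co-finiteness of $\fk$ passes up to its normalizer. Since $\fk \le N_G(\fk)$, the natural projection $G/\fk \to G/N_G(\fk)$ is $G$-equivariant, so pushing forward the $G$-invariant probability measure on $G/\fk$ --- which exists for $\lambda$-almost every $\fk$ by the co-finiteness hypothesis --- yields a $G$-invariant probability measure on $G/N_G(\fk)$. Thus $N_G(\fk)$ is co-finite, and the orbit of $\fk$ carries an invariant probability measure, for $\lambda$-almost every $\fk$. With the hypothesis verified, Lemma~\ref{lem:co-finite-stabs} gives that the conjugation action $G \curvearrowright (\subg,\lambda)$ is essentially transitive, which is precisely the claim that $\lambda$ is supported on a single orbit $\{\fk^g : g \in G\}$.

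I expect the only point requiring genuine care to be the identification of the conjugation orbit with $G/N_G(\fk)$ together with the passage of co-finiteness from $\fk$ to the larger subgroup $N_G(\fk)$ via the equivariant projection; everything else is a direct invocation of Lemma~\ref{lem:co-finite-stabs}. The standard-Borel and measurability requirements of that lemma are automatic here, since $\subg$ is a standard space and the conjugation action is continuous, and the lemma asks only for existence of an invariant probability measure on almost every orbit rather than for a measurable choice of such measures.
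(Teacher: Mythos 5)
Your proposal is correct and matches the paper's intended argument: the paper states this result as an immediate corollary of Lemma~\ref{lem:co-finite-stabs}, applied to the conjugation action $G \curvearrowright (\subg,\lambda)$, and the verification you supply --- identifying the conjugation orbit of $\fk$ with $G/N_G(\fk)$ and pushing the invariant probability measure forward along the equivariant projection $G/\fk \to G/N_G(\fk)$ --- is exactly the missing detail. Your filled-in steps, including the observation that invariance plus concentration on a single orbit makes the measure automatically ergodic, are sound.
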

In Stuck-Zimmer this is proven for the case of Lie
groups~\cite[Corollary 3.2]{stuck1994stabilizers}.

We next prove an analogue of Lemma~\ref{lem:co-finite-stabs} for
almost direct products.
\begin{definition}
  Let $G_1$ and $G_2$ be closed subgroups of $G$. Then $G$ is said to
  be an {\em almost direct product} of $G_1$ and $G_2$ if the two
  groups commute and if $G=G_1G_2$.
\end{definition}

\begin{lemma}
  \label{lem:co-finite-stabs2}
  Let $G=G_1G_2$ be an lcsc almost direct product, and let $G
  \curvearrowright (X,\im)$ be a pmp action on a standard measurable
  space that is $G_1$-ergodic. Assume that there exists a
  $G_1$-invariant probability measure on $\im$-almost every
  $G$-orbit. Then the action $G \curvearrowright (X,\im)$ is
  essentially transitive.
\end{lemma}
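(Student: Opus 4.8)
The plan is to mirror the proof of Lemma~\ref{lem:co-finite-stabs}, but to replace the global $G$-ergodic decomposition by the $G_1$-ergodic decomposition and to feed in the hypothesized $G_1$-invariant measures on the $G$-orbits. Write $E(X)^{G_1}$ for the space of $G_1$-invariant ergodic probability measures on $X$, and let $\beta : X \to E(X)^{G_1}$, $y \mapsto \beta_y$, be a decomposition map for the $G_1$-action as provided by Theorem~\ref{thm:varadarajan}. Since $G_1$ is normal in $G$ (because $G_1$ and $G_2$ commute and $G=G_1G_2$), the group $G$ acts on $E(X)^{G_1}$ by $g\cdot e = ge$, and $\beta$ is strictly $G_1$-invariant while for each fixed $g$ the map $y\mapsto g^{-1}\beta_{gy}$ is again a decomposition map, so by essential uniqueness $\beta_{gy}=g\beta_y$ for $\im$-almost every $y$. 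The first step is to upgrade this per-element almost-everywhere equivariance to a strictly $G$-equivariant Borel model: there is a $G$-invariant, $\im$-conull Borel set $X'\subseteq X$ on which $\beta_{gy}=g\beta_y$ holds for all $g\in G$. This is the standard fact that the ergodic decomposition relative to a normal subgroup can be realized equivariantly, and it is the only place where the almost-direct-product structure is used.

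Next I would introduce the set
\[
  C = \{\, y \in X' : \beta_y \text{ is supported on the single } G\text{-orbit } Gy \,\},
\]
which is Borel because orbits of a Borel action of an lcsc group are Borel. Using the strict equivariance on $X'$, together with the fact that every $g\in G$ fixes each $G$-orbit setwise (so $g$ sends a measure supported on one $G$-orbit to a measure supported on the same orbit), one checks that $C$ is $G$-invariant. Now fix, for $\im$-almost every $x$, a $G_1$-invariant probability measure $\eta_x$ on the orbit $Gx$, as furnished by the hypothesis; for almost every such $x$ we have $Gx\subseteq X'$, since $X'$ is $G$-invariant and conull. As $\eta_x$ is $G_1$-invariant, property~(3) of the decomposition map gives $\eta_x = \int_X \beta_y\, d\eta_x(y)$, and since $\eta_x(Gx)=1$ the integrand satisfies $\beta_y(Gx)=1$ for $\eta_x$-almost every $y$; because $Gy=Gx$ for $y\in Gx$, this says precisely that $\eta_x(C)=1$. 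Since $\eta_x$ is supported on $Gx$ and $C$ is $G$-invariant, the orbit $Gx$ must meet $C$, hence $Gx\subseteq C$, and in particular $x\in C$. As this holds for $\im$-almost every $x$, we conclude $\im(C)=1$.

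Finally I would invoke $G_1$-ergodicity. Since $\im\in E(X)^{G_1}$, property~(2) of the decomposition map gives $\beta_y=\im$ for $\im$-almost every $y$. Combining this with $\im(C)=1$, for $\im$-almost every $y$ the measure $\im=\beta_y$ is supported on the single orbit $Gy$, i.e.\ $\im(Gy)=1$. Thus $\im$ is carried by one $G$-orbit, and the action $G\curvearrowright(X,\im)$ is essentially transitive, as claimed.

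The main obstacle is the transfer carried out in the second paragraph. The orbital measures $\eta_x$ live on $Gx$, which is typically $\im$-null, whereas the identity $\beta_y=\im$ is only an $\im$-almost-everywhere statement and is therefore useless when evaluated on a null orbit; so one cannot simply say that the orbital components of $\eta_x$ equal $\im$ (indeed they cannot, unless $\im(Gx)=1$). What rescues the argument is that the property defining $C$ is genuinely $G$-invariant, so membership of a \emph{single} point of the null orbit $Gx$ propagates to the $\im$-generic base point $x$. This rests on having a strictly $G$-equivariant model of the $G_1$-ergodic decomposition, using the normality of $G_1$; establishing that model, rather than merely the per-element almost-everywhere equivariance, is the technical crux of the proof.
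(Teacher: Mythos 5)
Your proposal follows the same route as the paper's proof: take Varadarajan's decomposition map for the $G_1$-action, use the almost-direct-product structure (normality of $G_1$) to let $G$ act on $E(X)^{G_1}$, extract per-element almost-everywhere equivariance of the decomposition map from essential uniqueness, transfer the hypothesized $G_1$-invariant orbital measures $\eta_x$ to the $\im$-generic point via equivariance, and conclude with $G_1$-ergodicity and property (2). Your set $C$ and its $G$-invariance is just a repackaging of the paper's direct computation $g\beta^1_x = \beta^1_{gx} = \eta_x$, and that part of your argument is sound.

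The gap is exactly where you yourself locate the crux: you assert, as a ``standard fact,'' that the per-element a.e.\ identity $\beta_{gy}=g\beta_y$ can be upgraded to strict equivariance on a $G$-invariant $\im$-conull set $X'$, and you give no argument for it. This upgrade is the real content of the lemma and it is not automatic: essential uniqueness gives, for each \emph{fixed} $g\in G$, a conull set $A_g$ on which $\beta_{gy}=g\beta_y$, but in the intended applications $G_2$ is a connected (hence uncountable) group, so one cannot intersect the sets $A_g$ over $g\in G_2$. The paper handles precisely this point in two steps: when $G_2$ is countable it takes $A=\bigcap_{g\in G_2}A_g$ (equivariance under $G_1$ being automatic, since each $\beta^1_x$ is $G_1$-invariant); in the general case it passes to the operator $U^1(f)(x)=\int_X f(y)\,d\beta^1_x(y)$ on bounded measurable functions, identifies it, for every $G_1$-invariant measure $\theta$, with the conditional expectation onto $L^\infty(\ergcom{(X,\theta)}{G_1})$, and uses the commutation of the $G_1$- and $G_2$-actions to conclude that this conditional expectation, hence $U^1$, hence $\beta^1$, is essentially $G$-equivariant --- simultaneously for all $g$. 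Note also that what you need is strictly stronger than the standard fixed-measure statement that $(X,\im)\to\ergcom{(X,\im)}{G_1}$ is a $G$-factor map: your argument applies equivariance at points of the $\im$-null orbits $Gx$ carrying the measures $\eta_x$ (and the paper's, at a generic $x$ but for an arbitrary $g$ determined by the orbit), so a formulation valid only $\im$-a.e.\ for each fixed $g$ is useless. Unless you can cite a theorem giving the equivariant model in this precise strength, your proof is incomplete at its decisive step.
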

\begin{proof}
  Let $\beta^1 : X \to E(X)^{G_1}$ be a decomposition map of the
  $G_1$-action on $X$.  Note that, since $G_1$ and $G_2$ commute, it
  holds for every $\eta \in E(X)^{G_1}$ and $g \in G$ that $g\eta \in
  E(X)^{G_1}$, and so $G$ acts on $E(X)^{G_1}$. Hence one can consider
  the question of whether $\beta^1$ commutes with $G$.
  
  In fact, in order to follow the same arguments of
  Lemma~\ref{lem:co-finite-stabs} we will require that $\beta^1$ be
  essentially $G$-equivariant. That is, that there exists an $A
  \subseteq X$ with $\theta(A)=1$ for any $G_1$-invariant measure
  $\theta$, and such that $g\beta^1_x =\beta^1_{g x}$ for all $g\in G$
  and $x\in A$.  Assume first that $\beta^1$ satisfies this condition.

  Since $m$ is $G_1$-invariant, $\im(A)=1$. Hence there exists an
  $\im$-full measure set $A' \subseteq A$ such that there exists a
  $G_1$-invariant and ergodic measure $\eta_x$ on $G x$ for all $x \in
  A'$.  Fix some $x \in A'$.  Since $\eta_x(G x)=1$, we can find some
  $y = g x \in G x$ such that $\beta^1_y=\eta_x$.  Then
  \begin{align*} 
    g\beta^1_x = \beta^1_{g x} = \beta^1_y = \eta_x
  \end{align*}
  and so we conclude that $\beta^1_x$ is supported on $G x$ for every
  $x \in A'$.

  Since $\im$ is $G_1$-invariant we can write
  \begin{align*}
    \im = \int_X\beta^1_x\dd\im(x) = \int_{A'}\beta^1_x\dd\im(x)
  \end{align*}
  and, by the $G_1$-ergodicity of $\im$, $\im=\beta^1_x$ for some $x
  \in A'$ and in particular $\im(G x)=1$. Hence the action $G
  \curvearrowright (X,\im)$ is essentially transitive.

  Finally, we argue that $\beta^1$ is essentially $G$-equivariant.
  Following Varadarajan, let $\mathcal{F}$ be the Banach space of all
  bounded measurable functions on $X$.  Let $U^1:\mathcal{F}\to
  \mathcal{F}$ be the operator defined by $U^1(f)(x)= \int_X f(y)
  \dd\beta^1_x(y)$, for any $f\in \mathcal{F}$. Note that by definition,
  the equivariance of $\beta^1$ is equivalent to the equivariance of
  $U^1$.

  Now, for any $G_1$-invariant measure $\theta$, $U^1$ is the
  conditional expectation defined by the factor $(X,\theta)\to
  \ergcom{(X,\theta)}{G_1}$, the space of $G_1$-ergodic components of
  $(X,\theta)$:
  \begin{align*}
  	U^1:L^\infty(X,\theta) \to L^\infty(\ergcom{(X,\theta)}{G_1}).
  \end{align*}
  Since the actions of $G_1$ and $G_2$ on $X$ commute, $U^1$, as the
  conditional expectation map, is $G_2$-equivariant. Hence $U^1 :
  \mathcal{F} \to \mathcal{F}$ is $G$-equivariant on a $\theta$-full
  measure set. Since this holds for any $G_1$-invariant measure
  $\theta$, we conclude that $U^1$, and so the associated
  decomposition map $\beta^1$, are essentially $G$-equivariant, and
  the proof is complete.
  
\end{proof}

As a final lemma before the proofs of
Corollaries~\ref{thm:rigidity-special} and~\ref{thm:stuck-zimmer} we
note the following fact regarding property (T) groups. It is a
straightforward generalization of the fact that any property (T) group
that is also amenable must admit a finite invariant measure.
\begin{claim}
  \label{clm:property-t}
  Let $G$ be an lcsc group, let $H \in \subg$ be co-amenable. Let
  $G' \in \subg$ be any closed subgroup with property (T). Then there
  exists a $G'$-invariant probability measure on $G/H$. In particular,
  if $G$ has property (T) then $H$ is co-finite in $G$.
\end{claim}
\begin{proof}
  Since $H$ is co-amenable in $G$, the quasi-regular representation of
  $G$ on $L^2(G/H)$ weakly contains the trivial representation;
  equivalently, there are almost-invariant non-zero vectors in this
  representation (see~\cite[Theorem 4.18 and Definition
  5.1]{shalom1999invariant}).

  Restrict this representation to a representation of $G'$. Then this
  restricted representation also has almost-invariant non-zero
  vectors. Since $G'$ has property (T) this implies that there also
  exist $G'$-invariant unit vectors.

  A unit $G'$-invariant vector in this representation corresponds to a
  function $f \in L^2(G/H,\nu)$, for some $G$-quasi-invariant measure
  $\nu$, such that for all $g \in G'$ and $\nu$-almost every
  $x \in G/H$
  \begin{align*}
    f(x) = f(g^{-1}x)\sqrt{\frac{\dd g_*\nu}{\dd\nu}(x)},
  \end{align*}
  and such that $\int |f|^2 d\nu  = 1$.
  
  Define $\nu'$ by $\dd\nu'(x) = |f(x)|^2 \dd\nu(x)$. Then for any $g
  \in G'$
  \begin{align*}
    \dd(g_*\nu')
    &= |f(g^{-1}x)|^2\dd\nu(g^{-1}x)\\
    &= |f(x)|^2\frac{\dd \nu}{\dd g_*\nu}(x)\dd(g_*\nu)(x)\\
    &= |f(x)|^2\dd\nu(x)\\
    &= \dd\nu'.
  \end{align*}
  Hence $\nu'$ is $G'$-invariant.  Finally,
  \begin{align*}
    \nu'(G/H) = \int_{G/H}|f(x)|^2\dd\nu(x) = 1, 
  \end{align*}
  and so $\nu'$ is a probability measure.
\end{proof}

We are now ready to prove Corollaries~\ref{thm:rigidity-special}
and~\ref{thm:stuck-zimmer}.
\begin{proof}[Proof of Corollary~\ref{thm:rigidity-special}]
  Let $K$ be the associated stabilizer IRS. By
  Theorem~\ref{thm:product-irs-rigidity}, there exists a normal
  subgroup $N = N_1 \times N_2 \lhd G$ such that $K$ is co-amenable in
  $N$.

  Since $G_1$ is just non-amenable, either $N_1$ is co-amenable in
  $G_1$ or else $N_1$ is trivial. In the latter case we have that $K$
  is contained (in fact, co-amenable) in $\{e\} \times N_2$, and 
  since $K$ is irreducible, it must almost surely equal $\{e\} \times
  N_2'$, for some $N_2' \lhd G_2$. Since the action $G
  \curvearrowright (X,\im)$ is faithful, $N_2' = \{e\}$ and the action
  is essentially free.
  
  We are therefore left with the case that $N_1$ is co-amenable in
  $G_1$.  Then since $K$ is co-amenable in $N_1 \times N_2$ we get
  that $K$ is co-amenable in $G_1\times N_2$. It follows that, since
  $G_1$ has property (T), $m$-almost every $G_1 \times N_2$-orbit
  admits a $G_1$-invariant probability measure
  (Claim~\ref{clm:property-t}), and so each $G$-orbit admits a
  $G_1$-invariant probability measure. It follow from
  Lemma~\ref{lem:co-finite-stabs2} that $G \curvearrowright (X,\im)$
  is essentially transitive, and hence $K$ is co-finite.
\end{proof}

\begin{proof}[Proof of Corollary~\ref{thm:stuck-zimmer}]
  Assume that $G_1$ is a simple factor of $G$ with property (T), and
  note that by Theorem~\ref{thm:lie-irs-rigidity} $K$ is either almost
  surely equal to a normal subgroup $N$, or $K$ is co-amenable in $G$. 

  In the former case, since the action is faithful, the only
  possibility is $N=\{e\}$, and then the action is essentially free.

  In the latter case, the same argument of the proof of
  Corollary~\ref{thm:rigidity-special} above shows that
  $G \curvearrowright (X,\im)$ is essentially transitive and $K$ is
  co-finite in $G$.

  Finally, it follows from the Borel Density Theorem that $K$ is
  discrete, and hence a lattice in $G$.
\end{proof}

\bibliography{irs_rigidity}
\end{document}